\newtheorem{theorem}{Theorem}[section]
\newtheorem{corollary}[theorem]{Corollary}
\newtheorem{proposition}[theorem]{Proposition}
\newtheorem{lemma}[theorem]{Lemma}
\theoremstyle{definition}
\newtheorem{definition}[theorem]{Definition}
\newtheorem{remark}[theorem]{Remark}
\newtheorem{example}[theorem]{Example}
\def\N{{\mathbb N}}
\def\Z{{\mathbb Z}}
\numberwithin{equation}{section}
    \thanks{\hspace{-1.66em} 2010 \emph{Mathematics Subject Classification.}
	Primary 37B99; Secondary 54H20.}
    \thanks{\noindent \emph{Keywords and phrases}. Dynamical system, isolated point, orbit, point transitive, quasi-continuous, topological transitive.}
   \thanks{\noindent $^\dagger$The paper was partially written
	when the first author visited the National University of Ireland,
	Galway, in July 2013 when he was on sabbatical leave. He would 
	like to acknowledge the support by NUI Galway Millennium Fund 
	and the hospitality of the School of Mathematics, Statistics 
	and Applied Mathematics at NUI, Galway.}
\begin{document}
    \title[Topological Transitivity in Quasi-continuous $\cdots$]{Topological Transitivity in Quasi-continuous 
    Dynamical Systems$^\dagger$}
    \author[J. Cao]{Jiling Cao}
    \address{Department of Mathematical Sciences, School of Engineering, Computer and Mathematical Sciences, 
    Auckland University of Technology, Private Bag 92006, Auckland 1142, New Zealand}
    \email{jiling.cao@aut.ac.nz}
    \author[A. McCluskey]{Aisling McCluskey}
    \address{School of Mathematics, Statistics and Applied
    Mathematics, National University of Ireland, Galway, Ireland}
    \email{aisling.mccluskey@nuigalway.ie}
    \date{}
    \maketitle

    \begin{abstract}
    A quasi-continuous dynamical system is a pair $(X,f)$ consisting of a topological space $X$ and a mapping $f: 
    X\to X$ such that $f^n$ is quasi-continuous for all $n \in \N$, where $\mathbb N$ is the set of 
    non-negative integers. In this paper, we show that under appropriate assumptions, various definitions 
    of the concept of topological transitivity are equivalent in a quasi-continuous dynamical system. Our 
    main results establish the equivalence of topological and point transitivity in a quasi-continuous 
    dynamical system. These extend some classical results on continuous dynamical systems in 
    \cite{akin-carlson:2012}, \cite{degirmenci:2003} and \cite{silverman:1992}, and some results on 
    quasi-continuous dynamical systems in \cite{crannell-frantz:05} and  \cite{crannell-martelli:00}. 
    \end{abstract}

    \section{Introduction} \label{sec:intro} 
    
    In the literature, two groups of different definitions of chaotic dynamical systems have been proposed. 
    In the first group, chaos is approached from the measure theoretic point of view. In the second group, 
    chaos is approached from the non-linear analysis point of view, where a mapping $f: X \to X$ is 
    considered \emph{chaotic} in $X$ if $f$ has at least sensitive dependence on initial conditions in 
    $X$. To this requirement, many authors add topological transitivity, and another condition frequently 
    required is the existence of a dense orbit, see \cite{devaney:1989}, \cite{moser:1980}, 
    \cite{gulick:1992}, \cite{martelli:1992}, \cite{martelli-dang:1998} and \cite{wiggins:1991}. The 
    latter property has also been called \emph{point transitivity} by some authors.
    
    As a motivation for the notion of topological transitivity of a system, one may think of a real 
    physical system, where a state is never given or measured exactly, but always up to a certain 
    error. So, instead of points, one should study (small) open subsets of the phase space and describe 
    how they move in that space. Intuitively, a topologically transitive mapping $f$ has points that
    eventually move under iteration from one arbitrarily small neighbourhood to any other. Consequently, 
    the dynamical system cannot be broken down or decomposed into two subsystems (disjoint sets with 
    nonempty interiors) which do not interact under $f$, i.e., are invariant under the mapping.
    
    In the study of dynamical systems, it is generally assumed that ``topological transitivity" and 
    ``point transitivity" are equivalent when $X$ is a compact metric space and $f$ is continuous, e.g., 
    Proposition 39 of \cite{block-coppel:1992}. However, as noted in \cite{kolyada-snoha:94} and 
    \cite{degirmenci:2003}, these two conditions are independent in general even in compact metric spaces 
    with continuous mappings. Moreover, there are several different common definitions of topological 
    transitivity. It had been a part of the folklore of dynamical systems that under reasonable 
    assumptions they are equivalent until Akin and Carlson \cite{akin-carlson:2012} provided a complete 
    description of the relationships among them. In \cite{akin-auslander:2016}, Akin et al. further 
    described various strengthenings of the concept of topological transitivity. 
    
    A common framework in the study of dynamical systems assumes that the phase space is compact
    metric and the self-mapping is continuous. One of the difficulties in relating the definitions 
    of the two groups, mentioned at the beginning, derives from a topological property of $f$. This 
    is because non-linear definitions normally require the continuity of $f$, but measure theoretic 
    definitions may apply to functions with some type of discontinuity which is not too far from 
    continuity. Motivated by this, Crannell and Martelli studied dynamics of quasi-continuous 
    mappings in \cite{crannell-martelli:00}. They showed the equivalence for quasi-continuous 
    mappings of two non-linear analysis definitions of chaotic dynamical systems due to Wiggins 
    \cite{wiggins:1991} and Martelli \cite{martelli:1992}. They also extended several well known
    results in \cite{banks:1992} and \cite{touhey:1997} for continuous dynamical systems to 
    quasicontinuous systems. Note that Crannell and Martelli \cite{crannell-martelli:00} assumed
    the phase spaces of their systems to be compact metric. 
    
    In this paper, we continue the study of dynamics of quasi-continuous systems. Our motivation 
    is to study relationships among various definitions of topological transitivity in quasi-continuous
    dynamical systems whose phase spaces are general topological spaces. 
    The rest of this paper is organized as follows. In Section \ref{sec:notion}, we introduce notation, 
    definitions and basic relationships among different concepts of topological transitivity and point
    transitivity. In Section \ref{sec:quasi-cont}, we introduce quasi-continuous dynamical 
    systems and study some basic properties. We also provide two results which show how far a 
    quasi-continuous system is from a continuous one. Section \ref{sec:equivalence} is devoted to a
     study of equivalence among different versions of topological transitivity, and  equivalence of
    topological and point transitivity in a quasi-continuous dynamical system. In the 
    last section, we discuss what happens with these equivalences in a quasi-continuous dynamical 
    system when the phase space contains isolated points.

    \section{Definitions and basic relationships} \label{sec:notion} 
    
    Let $\mathbb N$ denote the set of nonnegative integers and let $\mathbb Z$ denote the set of
    integers. By a dynamical system, we mean a pair $(X, f)$, where $X$ is a topological space 
    (called the \emph{phase space}) and $f: X \to X$ is a mapping from $X$ into itself (not necessarily 
    continuous). The dynamics of the system is given by iteration. To avoid triviality, throughout
    the paper, we assume that $X$ contains at least two points. A point $x \in X$ ``moves", with 
    its trajectory being the sequence $x$, $f(x)$, $f^2(x)$, $f^3(x)$, $\dots$, where $f^n$ is 
    the $n$th iteration of $f$. The point $f^n(x)$ is the position of $x$ after $n$ units of time. 
    The set of points of the trajectory of $x$ under $f$ is called the \emph{forward orbit of $x$}, 
    denoted by ${\rm Orb}_f(x)$, that is, ${\rm Orb}_f(x) =\{f^n(x): n \in \mathbb N\}$.
    The \emph{omega limit set for $x$ under $f$}, denoted by $\omega f(x)$, is given by
    \[
    \omega f(x) := \bigcap_{k\in \mathbb N}\overline{\{f^n(x): n \geq k\}},
    \] 
    and is precisely the set of all accumulation points of the sequence $\langle f^n(x): n \in \mathbb N \rangle $. A bi-infinite
    sequence $\langle x_k: k \in \mathbb Z \rangle$ is called an \emph{orbit sequence} if 
    $f(x_k)=x_{k+1}$ for all $k$; and the set  $\{ x_k: k \in \mathbb Z \}$ of its elements is
    called an \emph{orbit}. In addition, we will also call a sequence $\langle x_k: k \ge n 
    \rangle$ an \emph{orbit sequence} if $n \in \N$ and $f(x_k) =x_{k+1}$ for all $k \ge n$ 
    and $f^{-1}(x_n)=\emptyset$; the set of elements of this sequence is ${\rm Orb}_f(x_n)$.
    
    For subsets $A, B \subseteq X$, following \cite{akin-carlson:2012}, we define 
    the \emph{hitting time sets}
    \[
    N(A, B) := \{n \in \mathbb Z: f^n(A) \cap B \ne \emptyset\}\ \mbox{ and }\
    N_+(A, B) := N(A, B) \cap {\mathbb N}.
    \]
    Note that
    \[
    N(A, B) = N_+(A, B) \cup N_+(B,A).
    \]
    
    \begin{definition} \label{defn:toptrans}
    A dynamical system $(X, f)$ is called \emph{topologically transitive} (TT$_+$) if for every 
    pair of nonempty open sets $U, V \subseteq X$, the set $N_+(U, V)$ is nonempty. 
    \end{definition}
    
    It is easy to see that $(X, f)$	is topologically transitive if, and only if, for every nonempty 
    open set $U$, $\bigcup_{n\in {\mathbb N}}f^n(U)$ is dense in $X$. 
    
    \begin{remark}
    Definition \ref{defn:toptrans} is the definition for topological transitivity commonly given 
    in the literature e.g., \cite{akin-auslander:2016}, \cite{degirmenci:2003}, \cite{kolyada-snoha:94}
    and \cite{silverman:1992}. 
    Note that Akin and Carlson \cite{akin-carlson:2012} define the properties TT and IN in the system 
    $(X, f)$ as follows:
    
    \medskip
    \begin{itemize}
    \item[(IN)] $X$ is not the union of two proper, closed and +invariant sets, where a set $A\subseteq
    X$ is called \emph{+invariant} if $f(A) \subseteq A$.
    \item[(TT)] For every pair of nonempty open sets $U, V \subseteq X$, the set $N(U, V)$ is nonempty.
    \end{itemize}

    \medskip\noindent
    They labelled topological transitivity in Definition \ref{defn:toptrans} as the property TT$_+$. 
    In the same paper, they also defined the property TT$_{++}$ as follows:
    
    \medskip
    \begin{itemize}
    \item[(TT$_{++}$)] For every pair of nonempty open sets $U, V \subseteq X$, the set $N_+(U, V)$ is 
    infinite.
    \end{itemize}
    \end{remark}
    
    A point $x \in X$ is called a \emph{transitive point} when for every nonempty open $V \subseteq X$, 
    the hitting time set $N_+(\{x\}, V)$ is nonempty. This is equivalent to saying that ${\rm Orb}_f(x)$
    is dense. The set of transitive points of $(X,f)$ is denoted by Trans$_f$. Following 
    \cite{akin-carlson:2012}, we define the following properties:
    
    \medskip
    \begin{itemize}
    \item[(DO)] There is an orbit sequence $\langle x_k: k \in \mathbb Z \rangle$ or $\langle x_k: k
    \ge n \rangle$ (for some $n$) dense in $X$.
    
    \item[(DO$_{+}$)] There is a point $x\in X$ such that ${\rm Orb}_f(x)$ is dense.
    
    \item[(DO$_{++}$)] There is a point $x\in X$ such that $\omega f(x) =X$.
    \end{itemize}
    
    \begin{definition} \cite{akin-carlson:2012}
    A dynamical system $(X, f)$ is called \emph{point transitive} if DO$_{+}$ holds in $(X,f)$.
    \end{definition}
    
    Akin and Carlson \cite{akin-carlson:2012} established the following implications for any general
    dynamical system $(X,f)$:
    \[
    \begin{tikzpicture}
    
    \node (a) at (-2,2) {DO$_{++}$};
    \node (b) at (0,2)  {DO$_+$};
    \node (c) at (2,2)  {DO};
    \graph {(a) -> (b) -> (c)};
    
    \node (d) at (-2, 0.2) {TT$_{++}$};
    \node (e) at (0, 0.2) {TT$_{+}$};
    \node (f) at (2, 0.2) {TT};
    \node (g) at (4, 0.2) {IN};
    \graph {(d) -> (e) -> (f) <-> (g)};
    
    \draw[->] (-2,1.5) -- (-2,0.7);
    \draw[->] (2,1.5) -- (2,0.7);
    
    \node (h) at (0, -0.5) {Diagram 1.};
    
    \end{tikzpicture}
    \]
    It was shown in Theorem 1.4 of \cite{akin-carlson:2012} if $X$ is a perfect (i.e. without isolated points) and $T_1$ space, then DO$_{++}$ and DO$_{+}$ are equivalent. Furthermore, if 
    $X$ is a perfect and $T_1$ space and $f$ is a continuous mapping, then TT$_{++}$, 
    TT$_{+}$ and TT are equivalent, refer to Proposition 4.2 in \cite{akin-carlson:2012}.

    \section{Quasi-continuous dynamical systems} \label{sec:quasi-cont} 
    
    Suppose that $X$ and $Y$ are topological spaces and $f: X \to Y$ is a mapping.  We say that 
    $f$ is \emph{quasi-continuous at a point $x_0 \in X$} if, for each open neighbourhood $W$ of 
    $f(x_0)$ and each open neighbourhood $U$ of $x_0$, there exists a nonempty open subset $V$ of $U$ 
    such that $f(V) \subseteq W$. If $f$ is quasi-continuous at each point of $X$, then we say that 
    $f$ is \emph{quasi-continuous on $X$}.
    
    This notion informally appeared in Baire's PhD thesis \cite{baire:1899}, where he indicated 
    that it was suggested to him by Volterra. Later, the notion of a quasi-continuous mapping 
    was formally introduced/defined by Kempisty \cite{kempisty:32} for real-valued functions of 
    real variables. Quasi-continuity of mappings between general topological spaces was also studied 
    by Levine \cite{levine:63} under the name of semi-continuity.
    
    As noted by Crannell and Martelli in \cite{crannell-martelli:00}, the composite of two 
    quasi-continuous mappings may fail to be quasi-continuous. Thus, when we try to extend results
    in a dynamical system $(X,f)$ where $f$ is continuous, we cannot just simply relax $f$ to be
    quasi-continuous. This motivates the following definition.
    
    \begin{definition}[\cite{crannell-martelli:00}]
    A system $(X,f)$ is said to be \emph{quasi-continuous} if for every $n\in \mathbb N$, $f^n: X 
    \to X$ is quasi-continuous.
    \end{definition}

    There is a quasi-continuous dynamical system which is not continuous, as shown by the following 
    simple example.

    \begin{example} \label{exam:quas-cont}
    Let $X=[0, 1]$ be endowed with the usual topology. Define a mapping $f: X \to X$ by
    \[
    f(x) = \left\{
    \begin{array}{ll}
    0, & \mbox{if $0 \le x \le \frac{1}{2}$;}\\[0.5em]
    1, & \mbox{if $\frac{1}{2} < x \le 1$.}
    \end{array}
    \right.
    \]
    This mapping $f$ is continuous at any point $x \not= \frac{1}{2}$ and is quasi-continuous at $x=
    \frac{1}{2}$. So $f$ is quasi-continuous but not continuous on $X$. Furthermore it can be
    readily checked that $f^n=f$ for all $n\in \N$. Thus we conclude that $(X,f)$ is a 
    quasi-continuous but not continuous dynamical system. 
    \end{example}

    The following lemma gives a characterization of quasi-continuity which is a useful tool in the 
    study of quasi-continuous dynamical systems. The proof of this lemma is straightforward and
    can be found in \cite{crannell-martelli:00} or other references.
    
    \begin{lemma}[\cite{crannell-martelli:00}] \label{lemma:key}
    Let $X$ and $Y$ be topological spaces and  $f:X \to Y$ a mapping. Then $f$
    is quasi-continuous on $X$ if, and only if, for each pair of nonempty open sets $U \subseteq X$
    and $V \subseteq Y$, either $U$ and $f^{-1}(V)$ are disjoint or there is a nonempty open subset 
    $W \subseteq X$ such that $W \subset U \cap f^{-1}(V)$. 
    \end{lemma}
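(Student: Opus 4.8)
The plan is to establish the two implications of the biconditional directly from the pointwise definition of quasi-continuity given above, using throughout the elementary translation that for an open $V \subseteq Y$ and any $W \subseteq X$ one has $f(W) \subseteq V$ if and only if $W \subseteq f^{-1}(V)$. One point to flag at the outset is a clash of notation: the symbol $V$ denotes an open subset of $Y$ in the statement of the lemma but an open subset of $U \subseteq X$ in the definition of quasi-continuity at a point. To keep the two roles distinct I would rename the latter, say to $W'$.

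For necessity, I would suppose $f$ is quasi-continuous on $X$ and let $U \subseteq X$ and $V \subseteq Y$ be nonempty open sets. If $U \cap f^{-1}(V) = \emptyset$ the first alternative holds and there is nothing to prove, so assume $U \cap f^{-1}(V)$ contains some point $x_0$. Then $U$ is an open neighbourhood of $x_0$ and $V$ is an open neighbourhood of $f(x_0)$, so quasi-continuity at $x_0$ produces a nonempty open $W \subseteq U$ with $f(W) \subseteq V$; the translation above gives $W \subseteq f^{-1}(V)$, hence $W \subseteq U \cap f^{-1}(V)$, which is the second alternative.

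For sufficiency, I would assume the stated dichotomy and fix an arbitrary $x_0 \in X$ together with an open neighbourhood $W'$ of $f(x_0)$ and an open neighbourhood $U$ of $x_0$. Applying the hypothesis to the nonempty open sets $U \subseteq X$ and $W' \subseteq Y$, and noting that $x_0 \in U$ and $f(x_0) \in W'$ force $x_0 \in U \cap f^{-1}(W')$, I conclude these sets are not disjoint, so the hypothesis yields a nonempty open $V' \subseteq U \cap f^{-1}(W')$. Then $V' \subseteq U$ and $f(V') \subseteq W'$, which is precisely what quasi-continuity at $x_0$ demands; since $x_0$ was arbitrary, $f$ is quasi-continuous on $X$.

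I do not anticipate any genuine obstacle here: the argument is a routine unwinding of the definitions in both directions, and the only thing requiring care is the bookkeeping of which open set lives in $X$ and which in $Y$, together with the repeated passage between $f(\,\cdot\,) \subseteq V$ and $\,\cdot\, \subseteq f^{-1}(V)$.
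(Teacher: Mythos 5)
Your proof is correct: both directions are exactly the routine unwinding of the definition, and your handling of the notation clash and the passage between $f(W)\subseteq V$ and $W\subseteq f^{-1}(V)$ is sound. The paper itself gives no proof of this lemma --- it cites \cite{crannell-martelli:00} and calls the argument straightforward --- and what you have written is precisely the standard argument being alluded to, so there is nothing further to compare.
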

    
    Our next result is analogous to Lemma 4.1 in \cite{akin-carlson:2012}.
    
    \begin{proposition} \label{prop:infinite}
    Let $(X,f)$ be a quasi-continuous dynamical system where $X$ is a perfect and Hausdorff phase space.
    If $(X,f)$ satisfies ${\rm TT}$, then for any nonempty open subset $U \subseteq X$, $N_+(U, U)$ 
    is infinite.
    \end{proposition}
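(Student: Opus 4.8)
The plan is to argue by contradiction. Assume $N_+(U,U)$ is finite; since $f^0(U)\cap U = U \neq \emptyset$ we have $0 \in N_+(U,U)$, so the set is nonempty and has a largest element $m$, and I will manufacture an element strictly greater than $m$. The heart of the argument is a \emph{lifting} step: I will find a nonempty open $W \subseteq U$ on which the long ``jump'' $f^m$ already lands back inside $U$, then locate a short positive self-return time inside $W$, and compose the short return with $f^m$ to produce a return of $U$ to itself of length exceeding $m$.

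The first ingredient is an auxiliary claim: in a perfect Hausdorff space, $\mathrm{TT}$ forces every nonempty open set $O$ to have a \emph{positive} self-return, i.e. $f^n(O)\cap O \neq \emptyset$ for some $n \ge 1$. To see this I would use perfectness and Hausdorffness to split $O$ into two disjoint nonempty open subsets $O_1, O_2$: a nonempty open set in a perfect space has at least two distinct points, which Hausdorffness separates by disjoint open sets, and intersecting these with $O$ gives $O_1, O_2$. Applying $\mathrm{TT}$ to the pair $(O_1,O_2)$ together with the identity $N(O_1,O_2)=N_+(O_1,O_2)\cup N_+(O_2,O_1)$ yields some $n \ge 0$ with $f^n(O_1)\cap O_2 \neq \emptyset$ or $f^n(O_2)\cap O_1 \neq \emptyset$. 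Disjointness of $O_1$ and $O_2$ rules out $n=0$, and since $O_1,O_2\subseteq O$ this gives $f^n(O)\cap O \neq \emptyset$ with $n\ge 1$.

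For the main step, note that $m \in N_+(U,U)$ means $U \cap (f^m)^{-1}(U) \neq \emptyset$. Here quasi-continuity enters: since $(X,f)$ is a quasi-continuous system, $f^m$ is quasi-continuous, so Lemma \ref{lemma:key} applied to the pair $(U,U)$ provides a nonempty open $W$ with $W \subseteq U \cap (f^m)^{-1}(U)$; thus $W \subseteq U$ and $f^m(W)\subseteq U$. Applying the auxiliary claim to $W$ gives $n_2 \ge 1$ and a point $x \in W$ with $f^{n_2}(x)\in W$. Then $f^{n_2}(x)\in W \subseteq (f^m)^{-1}(U)$, so $f^{m+n_2}(x)=f^m\!\left(f^{n_2}(x)\right)\in U$, while $x \in W \subseteq U$. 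Hence $m+n_2 \in N_+(U,U)$ with $m+n_2 > m$, contradicting the maximality of $m$, and so $N_+(U,U)$ must be infinite.

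I expect the main obstacle, and the only place the structural hypotheses are genuinely needed, to be the production of the open set $W$ on which $f^m$ maps into $U$. For a continuous iterate this would be immediate from taking a preimage neighbourhood, but for a merely quasi-continuous $f^m$ it is precisely Lemma \ref{lemma:key} that supplies such a $W$; this is where the quasi-continuity of \emph{every} iterate $f^n$ (and hence the assumption that $(X,f)$ is a quasi-continuous system, rather than merely that $f$ is quasi-continuous) is indispensable. Perfectness guarantees that every nonempty open set is nondegenerate, so the splitting step in the auxiliary claim never stalls, and Hausdorffness supplies the disjoint separation; both are used only inside that claim.
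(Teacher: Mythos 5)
Your proof is correct, and it is organized genuinely differently from the paper's. The paper proves the proposition by a direct inductive construction: starting from $U_0=U$, $k_0=0$, it repeatedly splits $U_n$ into two disjoint nonempty open sets (perfect $+$ Hausdorff), uses TT to get a positive hitting time $i_{n+1}$ between them, and invokes Lemma \ref{lemma:key} \emph{at every step} to produce a nonempty open $U_{n+1}\subseteq V_{n+1}\cap f^{-i_{n+1}}(W_{n+1})$, so that $f^{k_n+i_{n+1}}(U_{n+1})\subseteq U$; this exhibits an explicit strictly increasing sequence $\langle k_n\rangle$ in $N_+(U,U)$, nested inside shrinking open subsets of $U$ (mirroring Lemma 4.1 of Akin--Carlson). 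You instead argue by contradiction via a maximal element $m$ of the (nonempty, since $0\in N_+(U,U)$) hypothetically finite set, and you cleanly factor the argument into two pieces: an auxiliary claim that in a perfect Hausdorff space TT forces a \emph{positive} self-return time for every nonempty open set (which uses no quasi-continuity at all), plus a \emph{single} application of Lemma \ref{lemma:key} to the iterate $f^m$ to get a nonempty open $W\subseteq U\cap (f^m)^{-1}(U)$; composing the short return of $W$ with the long jump $f^m$ then beats $m$. Both proofs rest on the same three ingredients (splitting via perfect $+$ Hausdorff, positivity of hitting times from disjointness, and pulling back with Lemma \ref{lemma:key}), but your version is more economical -- quasi-continuity is needed only for the one iterate $f^m$, and the hypotheses are isolated exactly where they act -- while the paper's version is constructive and yields slightly more structure (an infinite nested family of open sets with prescribed return times), which is in the spirit of the result it adapts. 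One small point worth making explicit in a write-up: when $m=0$ the map $f^0=\mathrm{id}$ is trivially quasi-continuous and $(f^0)^{-1}(U)=U$, so your argument goes through uniformly in that degenerate case.
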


    \begin{proof}
    Let $U\subseteq X$ be a nonempty open subset. We will define inductively a nested sequence 
    $\langle U_n: n \in\mathbb N\rangle$ of nonempty open subsets of $U$ and a strictly increasing 
    sequence $\langle k_n: n \in \mathbb N\rangle$ in $\mathbb N$ such that $f^{k_n} (U_n)\subseteq U$.
    
    \emph{Initial step}. Let $U_0= U$ and $k_0=0$. It is trivial that $f^{k_0} (U_0) \subseteq U$.
    
    \emph{Induction step.} Suppose that we have defined a finite sequence 
    \[
    U_0 \supseteq U_1 \supseteq \cdots \supseteq U_{n-1} \supseteq U_n
    \] 
    of nonempty open subsets in $U$ and a finite sequence of integers $k_0<k_1 < \cdots <k_n$ in $\N$
    such that $f^{k_n}(U_n) \subseteq U$. Since $X$ is perfect and Hausdorff, we can pick up two 
    distinct points $x_{n+1}$ and $y_{n+1}$ in $U_n$ and two disjoint nonempty open subsets $V_{n+1}$ 
    and $W_{n+1}$ in $U_n$ such that $x_{n+1} \in V_{n+1}$ and $y_{n+1}\in W_{n+1}$. Since $(X,f)$ 
    satisfies TT, there is an integer $i_{n+1}$ such that 
    \[
    i_{n+1} \in N_+(V_{n+1}, W_{n+1})\cup N_+(W_{n+1}, V_{n+1}).
    \] 
    Without loss of generality, we assume that $i_{n+1} \in N_+(V_{n+1}, W_{n+1})$. By Lemma 
    \ref{lemma:key}, there is a nonempty open subset $U_{n+1}$ such that 
    \[
    U_{n+1} \subseteq V_{n+1} \cap f^{-i_{n+1}}(W_{n+1}). 
    \]
    Now it is clear that $U_{n+1} \subseteq U_n$. Also, as $V_{n+1}$ and $W_{n+1}$ are disjoint, 
    we must have $i_{n+1}>0$. Put $k_{n+1} = k_n + i_{n+1}$. Then $k_n < k_{n+1}$ and 
    \[
    f^{k_{n+1}}(U_{n+1}) = f^{k_n}\left(f^{i_{n+1}} (U_{n+1})\right) \subseteq f^{k_n}(W_{n+1}) \subseteq
    f^{k_n}(U_n) \subseteq U.
    \]
    This completes the induction step.
    
    Finally, by the construction of sequences $\langle U_n: n \in\mathbb N\rangle$ and $\langle k_n: 
    n \in \mathbb N\rangle$, we have $k_n \in N_+(U,U)$ for each $n \in \mathbb N$. We conclude 
    that $N_+(U,U)$ is infinite.
    \end{proof}

    A natural question is: \emph{How far is a quasi-continuous system $(X,f)$ from a continuous system?} 
    To study this question, we define $C^{\infty}(f)$ and $C^{\infty}_f$ as follows:
    \[
    C^{\infty}(f) := \{x \in X: f^n \mbox{ is continuous at $x$ for all } n \in \N \}
    \]
    and
    \[
    C^{\infty}_f := \{x \in X: f^n(x) \in C(f)  \mbox{ for all } n \in \N
    \},
    \]
    where $C(f)$ is the set of points at which $f$ is continuous.
    If $x \in C^{\infty}_f$, then $f$ is continuous at every point along ${\rm Orb}_f(x)$,
    and accordingly, $f^n$ is continuous at $x$ for every $n \in \mathbb N$, i.e. $C^{\infty}_f
    \subseteq C^{\infty}(f)$.
    
    Let $X$ be a topological space, and let $\rho$ be a metric on $X$. Then $X$ is said to be 
    \emph{fragmentable by $\rho$} if for all $\epsilon > 0$ and every non-empty $A \subseteq X$, 
    there is a relatively open non-empty $B \subseteq A$ with $\rho$-$\mbox{diam}(B) < \epsilon$.
    Note that every metrizable space $X$ is fragmented by some metric $\rho$. We refer 
    the reader to \cite{jayne:1993} for more details on fragmentability.
    Recall that a subset $R$ of $X$ is called \emph{residual}, if $X\setminus R$ is a countable
    union of nowhere dense subsets of $X$, or equivalently, $R$ contains a countable intersection of
    dense open subsets. Moreover if $X$ is a Baire space, then any residual subset $R$ in $X$ contains
    a dense $G_\delta$-set of $X$.
    
    \begin{proposition} \label{prop:residual1}
    Let $(X, f)$ be a quasi-continuous system. If $X$ is fragmented by a metric $\rho$ such that the 
    topology generated by the metric $\rho$ contains the topology of the space $X$, then 
    $C^{\infty}(f)$ is a residual set in $X$. Furthermore, if $X$ is also a Baire space, then 
    $C^{\infty}(f)$ contains a dense $G_\delta$-set.
    \end{proposition}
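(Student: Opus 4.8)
The plan is to reduce everything to a single-map statement and then intersect over iterates. Concretely, I would first prove: \emph{for any quasi-continuous map $g\colon X\to X$, the set $C(g)$ of points of continuity of $g$ (with respect to the topology $\tau$ of $X$) contains a residual $G_\delta$-set.} To detect continuity I would measure the failure of continuity with the metric $\rho$ on the target, introducing the $\rho$-oscillation
\[
\mathrm{osc}(g,x) := \inf\{\,\rho\text{-}\diam\, g(U) : U \text{ is an open neighbourhood of } x\,\}.
\]
Because the $\rho$-topology contains $\tau$, a point with $\mathrm{osc}(g,x)=0$ is a point of continuity of $g$ into $(X,\rho)$, hence \emph{a fortiori} a point of $\tau$-continuity; thus $R(g):=\{x:\mathrm{osc}(g,x)=0\}\subseteq C(g)$. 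A routine check shows that $\{x:\mathrm{osc}(g,x)<\epsilon\}$ is open for every $\epsilon>0$, so $R(g)=\bigcap_m\{x:\mathrm{osc}(g,x)<1/m\}$ is a $G_\delta$-set.

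The heart of the argument is to show that each closed set $E_\epsilon:=\{x:\mathrm{osc}(g,x)\ge\epsilon\}$ is nowhere dense, and this is exactly where both fragmentability and quasi-continuity must be combined. Suppose, toward a contradiction, that $E_\epsilon$ contains a nonempty open set $O$. Applying fragmentability to the nonempty set $A:=g(O)$, I obtain a nonempty relatively open subset $B=A\cap G$ (with $G$ open in $X$) satisfying $\rho\text{-}\diam(B)<\epsilon$. Since $B\neq\emptyset$, the open set $G$ meets $g(O)$, so $O\cap g^{-1}(G)\neq\emptyset$. Now Lemma \ref{lemma:key}, applied to the quasi-continuous map $g$ and the open sets $O$ and $G$, produces a nonempty open set $W\subseteq O\cap g^{-1}(G)$. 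For every $y\in W$ we have $g(y)\in g(O)\cap G=B$, whence $g(W)\subseteq B$ and $\rho\text{-}\diam\, g(W)<\epsilon$; but then $\mathrm{osc}(g,y)<\epsilon$ for each $y\in W\subseteq O\subseteq E_\epsilon$, a contradiction. Thus $E_\epsilon$ has empty interior, and being closed it is nowhere dense. Consequently $X\setminus R(g)=\bigcup_m E_{1/m}$ is meager, so $R(g)$, and therefore $C(g)$, is residual.

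To finish, I would apply this to $g=f^n$ for each $n\in\N$ — legitimate precisely because $(X,f)$ is a quasi-continuous system — and set $R:=\bigcap_{n\in\N}R(f^n)$. Each $R(f^n)$ is a residual $G_\delta$-set contained in $C(f^n)$, so $R$ is again a residual $G_\delta$-set and $R\subseteq\bigcap_n C(f^n)=C^{\infty}(f)$. Hence $C^{\infty}(f)$ is residual. For the last assertion, if $X$ is moreover a Baire space then the residual $G_\delta$-set $R$ is dense, so $C^{\infty}(f)$ contains the dense $G_\delta$-set $R$, as required.

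The step I expect to be the main obstacle is the nowhere-density of $E_\epsilon$: it is the only place where one genuinely fuses the two hypotheses, turning $\tau$-quasi-continuity of $g$ (via Lemma \ref{lemma:key}) into the control of the $\rho$-diameter of the image that fragmentability supplies. The remaining bookkeeping — openness of $\{\mathrm{osc}<\epsilon\}$, stability of residual $G_\delta$-sets under countable intersection, and density of residual sets in a Baire space — is routine.
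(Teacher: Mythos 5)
Your proof is correct, and its outer architecture coincides with the paper's: both reduce the proposition to the single-map fact that the continuity points of a quasi-continuous map form a residual set, write $C^{\infty}(f)=\bigcap_{n\in\N}C(f^n)$, and invoke closure of residual sets under countable intersection (plus the Baire property for the final claim). The genuine difference is how the single-map fact is obtained: the paper simply cites Theorem 1 of Kenderov--Kortezov--Moors \cite{KKM:01b}, whereas you reprove it from scratch via the $\rho$-oscillation function. Your key step is sound: if $E_\epsilon=\{x:\mathrm{osc}(g,x)\geq\epsilon\}$ contained a nonempty open set $O$, fragmentability applied to $g(O)$ gives a nonempty relatively open $B=g(O)\cap G$ of $\rho$-diameter less than $\epsilon$, so $O\cap g^{-1}(G)\neq\emptyset$; Lemma \ref{lemma:key} then produces a nonempty open $W\subseteq O\cap g^{-1}(G)$, and $g(W)\subseteq B$ forces $\mathrm{osc}(g,y)<\epsilon$ on $W\subseteq E_\epsilon$, a contradiction. (The preliminary reductions are also right: since the $\rho$-topology refines the topology of $X$, vanishing $\rho$-oscillation at $x$ gives continuity into $(X,\rho)$ and hence into $X$; and $\{x:\mathrm{osc}(g,x)<\epsilon\}$ is open because any witnessing neighbourhood $U$ of $x$ witnesses the same inequality at every point of $U$.) What your route buys is self-containedness and transparency --- it exhibits precisely where quasi-continuity and fragmentability interact, and it shows that each set $\{x:\mathrm{osc}(f^n,x)<1/m\}$ is dense open, so the residual set you build is literally a countable intersection of dense open sets, making the Baire-space conclusion immediate. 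What the paper's route buys is brevity, at the cost of resting on an external theorem proved under more general hypotheses than are needed here.
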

    
    \begin{proof} 
    For each $n\in \mathbb N$, since $f^n$ is quasi-continuous, by Theorem 1 in \cite{KKM:01b}, 
    $C(f^n)$ is a residual subset of $X$. Note that 
    \[
    C^{\infty}(f) = \bigcap_{n \in \mathbb N}C(f^n).
    \]
    Thus, $C^{\infty}(f)$ is also a residual subset of $X$.
    \end{proof}
    
    To see when $C^{\infty}_f$ is residual, we need some notion of openness on $f$. Given topological spaces $Y$ and $Z$,  recall that a 
    mapping $f: Y \to Z$ is \emph{feebly open} 
    \cite{haworth:1977} if for every nonempty open set $U \subseteq Y$, the interior of $f(U)$ is 
    nonempty.  Crannell et al \cite{crannell-frantz:05} called a quasi-continuous and feebly 
    open mapping \emph{quopen}. Theorem 8 of \cite{crannell-frantz:05} asserts that if $X$ is a compact
    metric space and $f$ is quopen, then $C^{\infty}_f$ is a residual set in $X$. To extend this result,
    we employ the concept of a $\delta$-open mapping introduced by Haworth and McCoy \cite{haworth:1977}.
    
    \begin{definition}[\cite{haworth:1977}]
    Given topological spaces $Y$ and $Z$, a mapping $f: Y \to Z$  is called
    \emph{$\delta$-open} if for every nowhere dense subset $N$ of $Z$, $f^{-1}(N)$ is nowhere dense 
    in $Y$, or equivalently, for every somewhere dense subset $A$ of $Y$, $f(A)$ is a somewhere dense 
    subset of $Z$.
    \end{definition}

    The following proposition may be known, but we cannot find it in the literature. 
    For the sake of completeness, we provide a full proof here.
    
    \begin{proposition} \label{lemma:weak-open}
    If a mapping $f: Y \to Z$ is quasi-continuous and feebly open, then it is $\delta$-open.
    \end{proposition}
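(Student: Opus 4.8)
The plan is to verify the nowhere-dense form of $\delta$-openness: I will show that for every nowhere dense $N \subseteq Z$, the preimage $f^{-1}(N)$ is nowhere dense in $Y$. The key reduction is the standard local characterization of nowhere density: a set $A \subseteq Y$ is nowhere dense if and only if every nonempty open $U \subseteq Y$ contains a nonempty open subset $U'$ with $U' \cap A = \emptyset$ (for open $U'$, disjointness from $A$ is the same as disjointness from $\overline{A}$). So it suffices to fix an arbitrary nonempty open $U \subseteq Y$ and produce a nonempty open $W \subseteq U$ with $f(W) \cap N = \emptyset$.

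First I would use feeble openness to pass into the codomain: since $U$ is nonempty and open, $G := \mathrm{int}\, f(U)$ is a nonempty open subset of $Z$. Because $N$ is nowhere dense in $Z$, I can shrink $G$ to a nonempty open $V \subseteq G$ with $V \cap N = \emptyset$. The crucial observation is then that $V \subseteq G \subseteq f(U)$, so every point of $V$ is the image of some point of $U$; picking any $v \in V$ and a preimage in $U$ shows $U \cap f^{-1}(V) \neq \emptyset$.

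Now I would invoke quasi-continuity through Lemma \ref{lemma:key} (applied with domain $Y$ and codomain $Z$): since the nonempty open sets $U \subseteq Y$ and $V \subseteq Z$ satisfy $U \cap f^{-1}(V) \neq \emptyset$, there is a nonempty open $W \subseteq U \cap f^{-1}(V)$. For this $W$ we have $f(W) \subseteq V$, hence $f(W) \cap N = \emptyset$, i.e. $W \cap f^{-1}(N) = \emptyset$, which is exactly what the reduction demanded. Letting $U$ range over all nonempty open subsets of $Y$ gives that $f^{-1}(N)$ is nowhere dense, and therefore $f$ is $\delta$-open.

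The step I expect to be the genuine content---rather than bookkeeping---is the handoff between the two hypotheses. Feeble openness is what guarantees that the image $f(U)$ has nonempty interior, so that the nowhere density of $N$ can be used \emph{inside} the codomain to carve out $V$; quasi-continuity alone would not secure an open target to shrink. Conversely, once $V$ is found, quasi-continuity (via Lemma \ref{lemma:key}) is exactly the tool that pulls the disjointness back to an honest open set $W$ in the domain. The only point requiring care is checking that $U \cap f^{-1}(V) \neq \emptyset$ before applying Lemma \ref{lemma:key}, since the lemma's alternative would otherwise be vacuously the disjoint case; this is precisely where $V \subseteq f(U)$ is used.
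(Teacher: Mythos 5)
Your proof is correct and uses exactly the same ingredients in the same roles as the paper's: feeble openness to get a nonempty open set inside $f(U)$, nowhere density of $N$ to shrink it to a set $V$ missing $N$, and quasi-continuity via Lemma \ref{lemma:key} to pull back to a nonempty open $W \subseteq U \cap f^{-1}(V)$. The only difference is cosmetic: you argue directly via the local characterization of nowhere density, while the paper runs the same argument as a proof by contradiction starting from a nonempty open $U \subseteq \overline{f^{-1}(N)}$; your explicit justification that $U \cap f^{-1}(V) \neq \emptyset$ (via $V \subseteq f(U)$) is a point the paper glosses over.
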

    
    \begin{proof}
    Let $N \subseteq Z$ be a nowhere dense subset of $Z$. To derive a contradiction, we assume that
    $f^{-1}(N)$ is somewhere dense. Then there exists a nonempty open subset $U$ of $Y$ such that $U 
    \subseteq \overline{f^{-1}(N)}$. Since $f$ is a feebly open mapping, then 
    $V:={\rm int}(f(U)) \ne \emptyset$.
    Furthermore, since $N$ is nowhere dense, the set
    \[
    W:=V \cap (Z \setminus \overline{N})
    \]
    is nonempty open in $Z$. Now we have a nonempty open subset $U$ of $Y$ and a nonempty open subset
    $W$ of $Z$ with $U \cap f^{-1}(W) \ne \emptyset$. Since $f$ is quasi-continuous, by Lemma 
    \ref{lemma:key} we have a nonempty open subset $U'$ such that $U'
    \subseteq U \cap f^{-1}(W)$. On the one hand, $U' \subseteq U \subseteq
    \overline{f^{-1}(N)}$ implies that $f(U') \cap N \ne \emptyset$. On the other hand, $f(U') 
    \subseteq W$ implies that $f(U') \cap N = \emptyset$. We have reached a contradiction. Thus 
    $f^{-1}(N)$ must be nowhere dense.
    \end{proof}

    Our next result extends Theorem 8 in \cite{crannell-frantz:05}.
    
    \begin{proposition} \label{prop:residual2}
    Consider a dynamical system $(X, f)$. If
    \begin{enumerate}
    \item $f$ is quasi-continuous and $\delta$-open, and
    \item $X$ is fragmented by a metric $\rho$ such that the topology generated by the metric 
    $\rho$ contains the topology of the space $X$,
    \end{enumerate} 
    then $C^{\infty}_f$ is a residual set in $X$. Furthermore if $X$ is also a Baire space, then 
    $C^{\infty}_f$ contains a dense $G_\delta$-set.
    \end{proposition}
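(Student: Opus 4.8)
The plan is to exhibit the complement $X\setminus C^{\infty}_f$ as a countable union of nowhere dense sets, which gives residuality at once. First I would rewrite the target set. By definition $x\in C^{\infty}_f$ means $f^n(x)\in C(f)$ for every $n\in\N$, i.e. $x\in (f^n)^{-1}(C(f))$ for every $n$, so that
\[
C^{\infty}_f = \bigcap_{n\in\N}(f^n)^{-1}(C(f)),
\qquad\text{hence}\qquad
X\setminus C^{\infty}_f = \bigcup_{n\in\N}(f^n)^{-1}\bigl(X\setminus C(f)\bigr).
\]

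Next I would pin down $X\setminus C(f)$ as meager. Since $f$ is quasi-continuous and $X$ is fragmented by a metric $\rho$ whose induced topology contains that of $X$, Theorem~1 of \cite{KKM:01b} applies exactly as in the proof of Proposition~\ref{prop:residual1} (the case $n=1$), so $C(f)$ is residual. Thus $X\setminus C(f)=\bigcup_{m}N_m$ for some nowhere dense sets $N_m$, and substituting gives
\[
X\setminus C^{\infty}_f = \bigcup_{n\in\N}\bigcup_{m}(f^n)^{-1}(N_m).
\]

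The crux is to show that each $(f^n)^{-1}(N_m)$ is nowhere dense, and this is precisely where $\delta$-openness of $f$ is used. Writing the preimage under $f^n$ as an $n$-fold iterate of the preimage operator via $(f^{n+1})^{-1}(A)=f^{-1}\bigl((f^n)^{-1}(A)\bigr)$, I would induct on $n$: the base case $(f^0)^{-1}(N_m)=N_m$ is nowhere dense by assumption, and if $(f^n)^{-1}(N_m)$ is nowhere dense then, because $f$ is $\delta$-open, its $f$-preimage $f^{-1}\bigl((f^n)^{-1}(N_m)\bigr)=(f^{n+1})^{-1}(N_m)$ is again nowhere dense. This is the only point where $\delta$-openness enters, and it is exactly what allows us to dispense with the assumption that every iterate $f^n$ is quasi-continuous: we need only $f$ itself quasi-continuous (for $C(f)$ residual) together with $f$ being $\delta$-open (to keep every pullback meager).

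Finally, since $X\setminus C^{\infty}_f$ is now displayed as a countable union of nowhere dense sets, $C^{\infty}_f$ is residual. The furthermore clause is then immediate: as recalled just before the statement, in a Baire space every residual set contains a dense $G_\delta$-set. I do not expect a serious technical obstacle; the one delicate point is the conceptual observation that $\delta$-openness of $f$ alone propagates ``nowhere denseness'' through all the iterated preimages, which is what makes the countable-union bookkeeping go through.
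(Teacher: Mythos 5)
Your proposal is correct and follows essentially the same route as the paper's proof: the same decomposition $C^{\infty}_f=\bigcap_{n\in\mathbb N}f^{-n}(C(f))$, the same appeal to Theorem~1 of \cite{KKM:01b} for residuality of $C(f)$, and the same induction in which $\delta$-openness of $f$ propagates meagerness (equivalently, residuality) through the iterated preimages. The only difference is cosmetic---you unfold ``residual'' into an explicit countable union of nowhere dense sets, whereas the paper works with residual sets directly---and your remark that this is why only $f$ itself (not every iterate) needs to be quasi-continuous is exactly the point of the statement.
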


    \begin{proof}
    First, as in \cite{crannell-frantz:05}, we can easily show that
    \[
    C^{\infty}_f = \bigcap_{n \in \mathbb N} f^{-n}(C(f)).
    \]
    By Theorem 1 in \cite{KKM:01b},  $C(f)$ is residual in $X$. Since
    $f$ is $\delta$-open, then $f^{-1}(C(f))$ is also residual. By induction, we see that $ f^{-n}(C(f))$
    is residual for all $n \in \mathbb N$. It follows that $C^{\infty}_f$ is a residual set in $X$.
    \end{proof}

    Propositions \ref{prop:residual1} and \ref{prop:residual2} indicate that in a certain sense, a
    quasi-continuous dynamical system approximates some continuous dynamical system. In \cite{akin:2013}
    and \cite{crannell-frantz:05}, the dynamics of a quasi-continuous mapping was described in terms 
    of suitable closed relations, and connected with the continuous dynamics on an
    invariant $G_\delta$-set and with continuous dynamics on the compact space of sample paths.
    
    \begin{remark}
    Note that the conclusion of Proposition \ref{prop:residual1} does not hold if we only require
    that $f$ is quasi-continuous, even when $X$ is a compact metric space. Indeed Crannell and Sohaib
    \cite{crannell-sohaib:08} provide an example of a quasi-continuous function $f: [0, 2] \to [0, 2]$  
    such that $f^2$ is not continuous at any point of $[0,2]$.
    \end{remark}

    

    \section{Equivalence theorems in quasi-continuous dynamical\\ 
    systems with perfect phase space}\label{sec:equivalence} 
    
    In this section, we establish the equivalence between point and topological transitivity in a
    quasi-continuous dynamical system whose phase space is perfect. 
    
    The following result is an analogue of Proposition 4.2 of \cite{akin-carlson:2012}.
    
    \begin{theorem} \label{thm:transitivity}
    Let $(X,f)$ be a quasi-continuous dynamical system. If $X$ is a perfect and Hausdorff space, 
    then the following implications hold:
    \[
    {\rm TT} \longrightarrow \mbox{\rm TT$_{+}$} \longrightarrow \mbox{\rm TT$_{++}$}.
    \]
    \end{theorem}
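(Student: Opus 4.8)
The implications ${\rm TT}_{++} \to {\rm TT}_{+} \to {\rm TT}$ are already recorded in Diagram 1 and hold for arbitrary systems, so the only content of the theorem is the two forward arrows. I would drive both of them by Proposition \ref{prop:infinite}, which guarantees that under ${\rm TT}$ every self-hitting set $N_+(W,W)$ (for $W$ nonempty open) is infinite, together with the quasi-continuous refinement supplied by Lemma \ref{lemma:key}: whenever an iterate $f^{j}$ sends some point of a nonempty open set $A$ into a nonempty open set $B$ (that is, $A\cap f^{-j}(B)\neq\emptyset$), there is a nonempty open $A'\subseteq A\cap f^{-j}(B)$ with $f^{j}(A')\subseteq B$. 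The recurring mechanism is to shrink to nested open sets and then compose via the iterate identity $f^{a+b}=f^{a}\circ f^{b}$, rather than ever composing two separate quasi-continuous witnesses.

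For ${\rm TT}_{+}\to {\rm TT}_{++}$ the plan is as follows. Since ${\rm TT}_{+}$ implies ${\rm TT}$ (Diagram 1), Proposition \ref{prop:infinite} applies. Fixing nonempty open $U,V$, I would use ${\rm TT}_{+}$ to pick $n\in N_+(U,V)$ and then Lemma \ref{lemma:key} to obtain a nonempty open $U_1\subseteq U$ with $f^{n}(U_1)\subseteq V$. Now $N_+(U_1,U_1)$ is infinite; for each of the infinitely many $k>0$ in it, Lemma \ref{lemma:key} yields a nonempty open $U_2\subseteq U_1$ with $f^{k}(U_2)\subseteq U_1$, so that $f^{n+k}(U_2)=f^{n}(f^{k}(U_2))\subseteq f^{n}(U_1)\subseteq V$ and hence $n+k\in N_+(U,V)$. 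Distinct values of $k$ give distinct values of $n+k$, so $N_+(U,V)$ is infinite.

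The arrow ${\rm TT}\to {\rm TT}_{+}$ is the crux. I would fix nonempty open $U,V$ and invoke ${\rm TT}$ together with $N(U,V)=N_+(U,V)\cup N_+(V,U)$: either $N_+(U,V)\neq\emptyset$, and we are done, or $N_+(V,U)\neq\emptyset$, and I would show the latter still forces $N_+(U,V)\neq\emptyset$. So choose $m\in N_+(V,U)$; we may assume $U\cap V=\emptyset$ (else $0\in N_+(U,V)$), whence $m>0$. Lemma \ref{lemma:key} gives a nonempty open $W\subseteq V$ with $f^{m}(W)\subseteq U$. Applying Proposition \ref{prop:infinite} to $W$, the set $N_+(W,W)$ is infinite, so I may select $k\in N_+(W,W)$ with $k>m$, and Lemma \ref{lemma:key} produces a nonempty open $W_1\subseteq W$ with $f^{k}(W_1)\subseteq W\subseteq V$. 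Since $W_1\subseteq W$ we also have $f^{m}(W_1)\subseteq U$, and therefore
\[
f^{k}(W_1)=f^{k-m}\bigl(f^{m}(W_1)\bigr)\subseteq f^{k-m}(U).
\]
Combined with $f^{k}(W_1)\subseteq V$ and $W_1\neq\emptyset$, this gives $f^{k-m}(U)\cap V\neq\emptyset$ with $k-m>0$, i.e. $k-m\in N_+(U,V)$, as required.

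The main obstacle is exactly that quasi-continuity is not preserved under composition, so the usual semigroup bookkeeping of hitting times (such as $N_+(U,W)+N_+(W,V)\subseteq N_+(U,V)$) is unavailable and one cannot glue two separate quasi-continuous witnesses. The device that circumvents this in ${\rm TT}\to {\rm TT}_{+}$ is to convert a backward hit $m\in N_+(V,U)$ into a forward hit by examining self-returns of the \emph{small} set $W\subseteq V$ (rather than of $U$ or $V$) and selecting a return time $k>m$, then peeling off the first $m$ steps through $f^{k}=f^{k-m}\circ f^{m}$. Recognizing that $W$ is the right set to iterate, and that the choice $k>m$ is what legitimizes the subtraction, is the heart of the argument; the Hausdorff and perfectness hypotheses enter only through Proposition \ref{prop:infinite}.
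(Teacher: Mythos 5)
Your proposal is correct and follows essentially the same route as the paper: both arguments rest on Proposition \ref{prop:infinite} together with Lemma \ref{lemma:key}, and your device for converting a backward hit $m\in N_+(V,U)$ into a forward one---shrinking to a nonempty open $W\subseteq V\cap f^{-m}(U)$, choosing a self-return time $k>m$ of $W$, and peeling off via $f^{k}=f^{k-m}\circ f^{m}$---is exactly the paper's Case $n<0$, while your ${\rm TT}_{+}\to{\rm TT}_{++}$ amplification is its Case $n\ge 0$. The only difference is packaging: the paper proves ${\rm TT}\to{\rm TT}_{++}$ in a single stroke with those two cases, whereas you establish the two arrows of the theorem separately.
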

    
    \begin{proof}
    It suffices to show that TT implies TT$_{++}$. Assume that $(X,f)$ satisfies TT. To show that
    $(X,f)$ satisfies TT$_{++}$, let $V$ and $W$ be any pair of nonempty open subsets of $X$. By 
    the TT property, there exists $n \in \mathbb Z$ such that $f^n(V) \cap W \ne \emptyset$. 
    
    \emph{Case 1. $n \ge 0$.} Since $f^n$ is quasi-continuous, there is a nonempty open subset
    $U$ of $X$ satisfying $U \subseteq V \cap f^{-n}(W)$. Then, by Proposition \ref{prop:infinite},
    $N_+(U,U)$ is infinite. For any $k\in N_+(U,U)$, we have
    \[
    \emptyset \ne f^n\left(f^k(U) \cap U\right) \subseteq f^{n+k}(U) \cap f^{n}(U) \subseteq f^{n+k}(V)
    \cap W
    \]
    and hence $n+k \in N_+(V, W)$. Thus, $N_+(V,W)$ is infinite.
    
    \emph{Case 2. $n < 0$.}
    Since $f^{-n}$ is quasi-continuous, there is a nonempty open subset $U$ of $X$ satisfying 
    $U \subseteq f^n(V) \cap W$. Then, by Proposition \ref{prop:infinite}, $N_+(U,U)$ is infinite. 
    For any $k\in N_+(U,U)$ with $k >-n$, we have
    \[
    \emptyset \ne f^k(U) \cap U \subseteq f^k\left(f^n(V)\right) \cap W \subseteq f^{n+k}(V)
    \cap W,
    \]
    and hence $n+k \in N_+(V, W)$. Thus, $N_+(V,W)$ is infinite.
    \end{proof}
    
    Let $\mathscr P$ be a family of nonempty open subsets in a topological space $X$. We call 
    $\mathscr P$ a \emph{$\pi$-base for $X$} if for every nonempty open subset $U$ of $X$, there 
    exists some $P \in \mathscr P$ such that $P \subseteq U$.
    
    \begin{theorem} \label{thm:dense_orbit}
    Let $(X,f)$ be a quasi-continuous dynamical system. Suppose that $X$ is a Baire space with 
    a countable $\pi$-base $\{P_n: n \geq 0\}$. Then the following conditions are equivalent:
    
    \medskip
  	\begin{itemize}
  	\item[(1)] {\rm DO}$_{++}$.
  	\item[(2)] For any non-empty open subset $U \subseteq X$ and any $k\in \mathbb N$, 
  	$\bigcup_{n \geq k} {\rm int}(f^{-n}(U))$ is dense in $X$.
  	\item[(3)] The set $\{ x\in X: \omega f(x)=X\}$ contains a dense $G_\delta$-set of $X$.
  	\end{itemize}
    \end{theorem}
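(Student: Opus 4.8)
The plan is to prove the cyclic chain $(1) \Rightarrow (2) \Rightarrow (3) \Rightarrow (1)$. The organising observation is that, because $\{P_n : n \geq 0\}$ is a $\pi$-base, the condition $\omega f(x) = X$ is equivalent to a countable family of requirements: for every $m \geq 0$ and every $k \in \N$ there is some $n \geq k$ with $f^n(x) \in P_m$. Indeed, $\omega f(x) = X$ says precisely that each tail $\{f^n(x) : n \geq k\}$ is dense, and density against all of $X$ is tested by the $\pi$-base. Consequently
\[
\{x \in X : \omega f(x) = X\} = \bigcap_{m \geq 0}\bigcap_{k \in \N} \bigcup_{n \geq k} f^{-n}(P_m),
\]
and the interiors of the sets $f^{-n}(P_m)$ will be the natural open approximations to work with, since the maps $f^n$ are only quasi-continuous and the preimages themselves need not be open.

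For $(1) \Rightarrow (2)$, I would fix a point $x_0$ with $\omega f(x_0) = X$, a nonempty open $U$, an index $k$, and an arbitrary nonempty open test set $W$; the goal is to produce a point of $W$ lying in $\bigcup_{n \geq k} {\rm int}(f^{-n}(U))$. Since every tail of ${\rm Orb}_f(x_0)$ is dense, I first choose $a$ with $f^a(x_0) \in W$, and then, using density of the tail from index $a+k$ onward, choose $b \geq a+k$ with $f^b(x_0) \in U$. Setting $n = b - a \geq k$ gives $f^a(x_0) \in W \cap f^{-n}(U)$, so this intersection is nonempty. Here quasi-continuity of $f^n$ enters: by Lemma \ref{lemma:key} applied to $f^n$ there is a nonempty open $W' \subseteq W \cap f^{-n}(U)$, whence $W' \subseteq {\rm int}(f^{-n}(U))$ and $W' \subseteq W$. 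As $W$ was arbitrary, $\bigcup_{n \geq k} {\rm int}(f^{-n}(U))$ meets every nonempty open set and is therefore dense.

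For $(2) \Rightarrow (3)$, I would set
\[
T' := \bigcap_{m \geq 0} \bigcap_{k \in \N} \bigcup_{n \geq k} {\rm int}(f^{-n}(P_m)).
\]
Each inner union is open, and taking $U = P_m$ in $(2)$ shows it is dense; since the pairs $(m,k)$ form a countable set and $X$ is a Baire space, $T'$ is a dense $G_\delta$. Because ${\rm int}(f^{-n}(P_m)) \subseteq f^{-n}(P_m)$, the displayed description of $\{x : \omega f(x) = X\}$ gives $T' \subseteq \{x : \omega f(x) = X\}$, establishing $(3)$. Finally $(3) \Rightarrow (1)$ is immediate: a dense $G_\delta$ in a Baire space is nonempty, so the set in $(3)$ is nonempty and DO$_{++}$ holds.

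The only substantive step is $(1) \Rightarrow (2)$, and within it the single essential use of the hypothesis is the passage from ``$f^a(x_0)$ lies in $W \cap f^{-n}(U)$'' to ``$W \cap {\rm int}(f^{-n}(U)) \neq \emptyset$''. This is exactly where quasi-continuity is indispensable: without it, $f^{-n}(U)$ may have empty interior even though it is nonempty, and the dense orbit alone would not force the open sets in $(2)$ to be dense. Everything else is bookkeeping with the $\pi$-base together with a routine Baire category argument.
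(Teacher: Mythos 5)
Your proof is correct and follows essentially the same route as the paper: the same cycle $(1)\Rightarrow(2)\Rightarrow(3)\Rightarrow(1)$, the same use of Lemma \ref{lemma:key} to pass from $W \cap f^{-n}(U) \neq \emptyset$ to a nonempty open subset of ${\rm int}(f^{-n}(U))$, and the same Baire-category intersection over the countable $\pi$-base in $(2)\Rightarrow(3)$. The only cosmetic difference is that you state the identity $\{x : \omega f(x)=X\} = \bigcap_{m}\bigcap_{k}\bigcup_{n\geq k} f^{-n}(P_m)$ explicitly up front, where the paper verifies membership of its $G_\delta$-set directly.
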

  
    \begin{proof}
  	Since (3) $\rightarrow$ (1) is trivial, we need only to prove (1) $\rightarrow$ (2) 
  	and (2) $\rightarrow$ (3).
  	
  	(1) $\rightarrow$ (2). Let a nonempty open set $U\subseteq X$ and a $k\in \mathbb N$ be 
  	given. Let $V$ be an arbitrary nonempty open subset of $X$. By the {\rm DO}$_{++}$ property, 
    there exists some $n \geq k$ such that $f^n(x) \in V$. 
  	In addition, the {\rm DO}$_{++}$ property also implies that
  	\[
  	U\cap \left\{ f^m(x): m \ge n+k+1\right\} \ne \emptyset.
  	\]
    Thus there exists an $m > n + k$ such that $f^m(x) \in U$. Since 
  	$f^{(m-n)}$ is quasi-continuous and $f^n(x) \in V \cap f^{-(m-n)}(U)$, by Lemma \ref{lemma:key}
  	there exists a nonempty open subset $W$ of $X$ such that $W \subseteq V \cap f^{-(m-n)}(U)$. 
  	Thus,
  	\[
  	\emptyset \not = W \subseteq V \cap {\rm int}\left(f^{-(m-n)}(U)\right) 
  	\subseteq V \cap \left(\bigcup_{n\geq k} {\rm int}\left(f^{-n}(U)\right) \right),
  	\]
  	which implies that (2) holds.
  	
  	(2) $\rightarrow$ (3). Recall that $\{P_n: n \geq 0\}$ is a $\pi$-base of
  	$X$. For any $n, k \in \mathbb N$, let
  	\[
  	S_{n,k} := \bigcup_{j\geq k} {\rm int}(f^{-j}(P_n)),
  	\]
    and 
    \[
    S := \bigcap_{n\in \mathbb N}\bigcap_{k\in \mathbb N}S_{n,k}.
  	\]
  	By (2), $S_{n,k}$ is open and dense in $X$ and thus $S$ is a dense $G_\delta$-set in $X$. Now 
  	we show that $\omega f(x)= X$ for all $x \in S$. To this end, let $x \in S$ and $k \in \mathbb N$
  	be fixed. For each nonempty open subset $V\subseteq X$, as $\{P_n: n \geq 0\}$ is a $\pi$-base, 
  	we can choose an $n_0\ge 0$ such that $P_{n_0}\subseteq V$. Then
  	\[
  	x \in S_{n_0, k} = \bigcup_{j\geq k} {\rm int}(f^{-j}(P_{n_0})) \subseteq 
  	\bigcup_{j \geq k}f^{-j}(P_{n_0}).
  	\]
  	Thus there is $n \ge k$ such that $x \in f^{-n}(P_{n_0})$, which implies that $f^n(x) \in V$. 
  	This means that ${\rm Orb}_f (f^k(x))$ is dense. It follows that $\omega f(x) =X$.
    \end{proof}

    \begin{theorem} \label{thm:equivalence}
    Let $(X, f)$ be a quasi-continuous dynamical system. Suppose that $X$ is a space of the second 
    category with a countable $\pi$-base ${\mathscr P}=\{P_n: n\in \mathbb N\}$. Then {\rm TT}$_+$ 
    implies that {\rm DO$_+$}. In addition, if $X$ is a Baire space, then {\rm Trans$_f$} 
    contains a dense $G_\delta$-set of $X$.
    \end{theorem}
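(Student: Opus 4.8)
The plan is to prove Theorem~\ref{thm:equivalence} by a Baire category argument, showing that under {\rm TT}$_+$ the set of transitive points is large. First I would fix the countable $\pi$-base ${\mathscr P}=\{P_n: n\in\mathbb N\}$ and, for each $n$, consider the set of points whose forward orbit meets $P_n$, namely $G_n := \bigcup_{m\in\mathbb N}\mathrm{int}(f^{-m}(P_n))$. A point $x$ is transitive precisely when its orbit is dense, i.e.\ when it meets every nonempty open set, and since ${\mathscr P}$ is a $\pi$-base this is equivalent to meeting every $P_n$; hence $\mathrm{Trans}_f \supseteq \bigcap_{n\in\mathbb N} G_n$ (using the interiors to stay inside the orbit-hitting sets, exactly as in the $(2)\to(3)$ argument of Theorem~\ref{thm:dense_orbit}). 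Each $G_n$ is open by construction, so the key point is to show each $G_n$ is dense.

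The heart of the argument is the density of each $G_n$, and this is where I expect {\rm TT}$_+$ together with quasi-continuity to do the work. Given an arbitrary nonempty open $V\subseteq X$, the {\rm TT}$_+$ property applied to the pair $(V, P_n)$ produces some $m\in\mathbb N$ with $f^m(V)\cap P_n\ne\emptyset$, equivalently $V\cap f^{-m}(P_n)\ne\emptyset$. Then, because $f^m$ is quasi-continuous, Lemma~\ref{lemma:key} yields a nonempty open $W$ with $W\subseteq V\cap f^{-m}(P_n)$; in particular $W\subseteq V\cap\mathrm{int}(f^{-m}(P_n))\subseteq V\cap G_n$. Since $V$ was arbitrary, $G_n$ meets every nonempty open set and is therefore dense. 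This mirrors the mechanism in the $(1)\to(2)$ step of the previous theorem, where quasi-continuity is exactly what upgrades a single orbit-hitting instance into an open set of witnesses.

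With each $G_n$ open and dense, I would invoke the category hypothesis. Here the main subtlety, and the part I expect to require care, is matching the conclusion to the hypothesis: $X$ is assumed only to be of the second category for the first assertion ({\rm TT}$_+$ implies {\rm DO}$_+$), whereas the stronger conclusion that $\mathrm{Trans}_f$ contains a dense $G_\delta$-set needs $X$ to be Baire. For the first part, the intersection $\bigcap_{n\in\mathbb N}G_n$ of countably many dense open sets cannot be empty in a second-category space (its complement is a countable union of nowhere dense sets, which cannot be all of $X$); any point in this intersection is transitive, giving {\rm DO}$_+$. For the second part, in a Baire space the countable intersection of dense open sets is a dense $G_\delta$-set, and since $\mathrm{Trans}_f\supseteq\bigcap_n G_n$, it follows that $\mathrm{Trans}_f$ contains a dense $G_\delta$-set.

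The main obstacle is purely in the bookkeeping of where one stays inside the orbit versus its closure: one must consistently use $\mathrm{int}(f^{-m}(P_n))$ rather than $\overline{f^{-m}(P_n)}$ so that membership in $\bigcap_n G_n$ genuinely forces $f^m(x)\in P_n$ for actual iterates $m$, not merely limit points. Quasi-continuity via Lemma~\ref{lemma:key} is precisely the tool that guarantees these interiors are nonempty and dense, so the proof hinges on applying it at each stage rather than on any delicate category estimate; the category input is then the standard Baire/second-category dichotomy applied to the $G_n$.
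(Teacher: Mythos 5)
Your proposal is correct and follows essentially the same route as the paper's own proof: your sets $G_n$ are exactly the paper's $U_n = \bigcup_{k}\mathrm{int}\left(f^{-k}(P_n)\right)$, the density of each $G_n$ is obtained by the same combination of {\rm TT}$_+$ and Lemma~\ref{lemma:key}, and the second-category versus Baire dichotomy is handled identically. No gaps.
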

  
    \begin{proof}
  	Let $(X, f)$ satisfy {\rm TT}$_+$. For each $n \in \mathbb N$, we define an open subset $U_n$
  	by
  	\[
  	U_n: = \bigcup_{k\in \mathbb N} {\rm int} \left(f^{-k}(P_n) \right).
  	\]
  	Let $U:= \bigcap_{n\in \mathbb N} U_n$. We first show that each $U_n$ is dense in $X$. To 
  	this end, let $V$ be an arbitrary nonempty open subset of $X$. Since $(X, f)$ satisfies
  	{\rm TT}$_+$, we have $N_+(V, P_n) \ne \emptyset$ and hence there is a 
  	$k \in \mathbb N$ such that $V \cap f^{-k}(P_n) \ne \emptyset$. 
  	Since $f^k$ is quasi-continuous, by Lemma \ref{lemma:key}  there is a nonempty open subset 
  	$W$ in $X$ such that $W \subseteq V \cap f^{-k}(P_n) $. This implies that $V \cap U_n \ne 
  	\emptyset$ which proves the claim.
  	
  	Next we show that $U \subseteq {\rm Trans}_f$. Let $x\in U$. We need to 
  	verify that Orb$_f(x)$ is dense in $X$. Let $G$ be an arbitrary nonempty open subset of $X$. 
  	Since $\mathscr P$ is a $\pi$-base, there must be some $n_0\in \mathbb N$ such that $P_{n_0} 
  	\subseteq G$. Then $x\in U_{n_0}$ implies that $f^{k_0}(x) \in P_{n_0} \subseteq G$ for some 
  	$k_0 \in \mathbb N$. Hence $x\in {\rm Trans}_f$.
  	
  	Finally, since $X$ is a space of the second category, $U \ne \emptyset$. It follows that
  	${\rm Trans}_f \ne \emptyset$ and thus $(X,f)$ satisfies DO$_+$. In addition, if $X$ is a
  	Baire space, then $U$ is a dense $G_\delta$-set of $X$.
    \end{proof}

    \begin{proposition} \label{prop:dense-orbit}
    Let $(X, f)$ be a quasi-continuous dynamical system whose phase space $X$ is a perfect and 
    $T_1$-space. Then, DO$_+$ and DO$_{++}$ are equivalent.
    \end{proposition}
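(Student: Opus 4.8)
The plan is to prove only the implication DO$_+ \Rightarrow$ DO$_{++}$, since the converse is already recorded in Diagram 1 (and in any case is immediate from $\omega f(x) \subseteq \overline{{\rm Orb}_f(x)}$). The pleasant point is that the \emph{same} point will witness both properties: if $x$ is a transitive point, I claim $\omega f(x)=X$ directly, so no passage to an auxiliary point is needed, and — notably — quasi-continuity of the system plays no role, mirroring Theorem 1.4 of \cite{akin-carlson:2012}.

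So I would suppose DO$_+$ holds and fix a transitive point $x$, so that ${\rm Orb}_f(x)$ is dense. First I would recast the target. Since $\omega f(x)=\bigcap_{k}\overline{\{f^n(x):n\ge k\}}$ is exactly the set of points every neighbourhood of which is entered by the orbit at arbitrarily large times, the equality $\omega f(x)=X$ is equivalent to the statement that $N_+(\{x\},V)$ is infinite for every nonempty open $V\subseteq X$. Thus the whole proposition reduces to upgrading ``the orbit hits $V$'' (which transitivity already guarantees) to ``the orbit hits $V$ infinitely often''.

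The heart of the argument is a short contradiction using that $X$ is perfect and $T_1$. Suppose some nonempty open $V$ is hit only finitely often and let $N=\max N_+(\{x\},V)$. I would split the orbit into the finite head $H:=\{f^n(x):0\le n\le N\}$ and the tail $T:=\{f^n(x):n>N\}$, where $T\subseteq X\setminus V$. Since $X$ is $T_1$, the finite set $H$ is closed, so
\[
X=\overline{{\rm Orb}_f(x)}=\overline{H\cup T}=H\cup\overline{T},
\]
while $T\subseteq X\setminus V$ (a closed set) forces $\overline{T}\cap V=\emptyset$. Intersecting the displayed identity with $V$ then yields $V\subseteq H$, i.e. $V$ is finite. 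The final step is to observe that this is impossible: in a perfect $T_1$ space every nonempty open set is infinite, for a point of a finite open set would be isolated (in $T_1$ the remaining finitely many points form a closed set whose complement isolates it). This contradiction establishes the claim, whence $\omega f(x)=X$ and DO$_{++}$ holds.

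I expect no serious obstacle here; the only things to get right are the book-keeping of the closure decomposition $\overline{H\cup T}=H\cup\overline{T}$ (valid precisely because $H$ is closed) and the elementary lemma that nonempty open subsets of a perfect $T_1$ space are infinite, which is exactly where both hypotheses on $X$ are genuinely used. It is worth remarking explicitly that neither continuity nor quasi-continuity is invoked, so the statement is in fact a purely topological fact about dense orbits.
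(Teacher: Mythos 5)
Your proof is correct, and it reaches the conclusion by a contradiction argument where the paper argues directly; the underlying mathematics is the same, but the packaging is genuinely different. The paper's proof: given $z \in X$, a neighbourhood $U$ of $z$, and $k \in \N$, perfectness plus $T_1$ make $U \setminus \{x, f(x), \dots, f^k(x)\}$ a nonempty open set, density of ${\rm Orb}_f(x)$ puts some orbit point $y = f^m(x)$ inside it, and exclusion of the initial segment forces $m > k$, so $z \in \omega f(x)$. You instead reformulate $\omega f(x) = X$ as ``every nonempty open $V$ is hit at arbitrarily large times,'' assume a maximal hitting time $N$ for some $V$ (note that $N_+(\{x\},V) \neq \emptyset$ by density of the orbit, so the maximum exists --- worth one explicit word), and use the closure decomposition $X = H \cup \overline{T}$ together with $\overline{T} \cap V = \emptyset$ to conclude $V \subseteq H$ is finite, contradicting the lemma that nonempty open subsets of a perfect $T_1$ space are infinite. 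Both arguments exploit the hypotheses in exactly the same way --- $T_1$ makes a finite initial orbit segment closed, perfectness prevents it from absorbing a nonempty open set --- so yours is essentially the contrapositive rendition of the paper's. What the paper's version buys is brevity: it works straight from the definition of $\omega f(x)$ with no reformulation and no decomposition. What yours buys is a cleaner separation of concerns: the dynamical content reduces to one line, and all the topology is isolated in the self-contained lemma about open sets in perfect $T_1$ spaces. Your remark that quasi-continuity is never invoked is accurate and applies equally to the paper's proof.
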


    \begin{proof}
    Assume that $(X, f)$ satisfies DO$_+$. Then there is a point $x \in X$ such that ${\rm Orb}_f(x)$ 
    is dense in $X$. To show that $\omega f(x) = X$, let $z \in X$ and $k \in \N$, and let $U$ be 
    an open neighborhood of $z$. Since X is a perfect $T_1$-space, $U\setminus 
    \{x, f(x),\dots, f^k(x)\}$ is a nonempty open subset of $X$, and hence we can take a point 
    \[
    y \in \left(U \setminus \{x, f(x),\dots, f^k(x)\}\right) \cap {\rm Orb}_f(x).
    \] 
    Since $y \in {\rm Orb}_f (x)$, $y = f^m(x)$ for some $m \in \N$. Further, since 
    \[
    f^m(x) = y \not\in \{x, f(x),\dots, f^k(x)\},
    \] 
    we have $m > k$. Thus $y \in U \cap \{f^n(x) : n\ge k\}$, and hence $z\in \omega f(x)$. 
    Therefore, $(X,f)$ satisfies DO$_{++}$.
    \end{proof}
    
    As a corollary of Theorems \ref{thm:transitivity}-\ref{thm:equivalence} and Proposition
    \ref{prop:dense-orbit}, we obtain the following result which extends Theorem 2.1 in 
    \cite{crannell-martelli:00}. 
    
    \begin{corollary} \label{coro:equivalence}
    Let $(X, f)$ be a quasi-continuous dynamical system whose phase space $X$ is perfect and Hausdorff. 
    Assume in addition that $X$ is  also a Baire space with a countable $\pi$-base ${\mathscr P}=
    \{P_n: n\in \mathbb N\}$. Then {\rm TT, TT$_+$, TT$_{++}$, DO, DO$_+$} and {\rm DO$_{++}$} are 
    equivalent. Furthermore, each of these properties is equivalent to each of the following:
    \begin{enumerate}
    \item {\rm Trans$_f$} contains a dense $G_\delta$-set of $X$.
    \item The set $\{ x\in X: \omega f(x)=X\}$ contains a dense $G_\delta$-set of $X$.
    \end{enumerate}
    \end{corollary}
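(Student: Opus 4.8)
The plan is to assemble Corollary \ref{coro:equivalence} by chaining together the implications already proved in Theorems \ref{thm:transitivity}--\ref{thm:equivalence} and Proposition \ref{prop:dense-orbit}, together with the general implications recorded in Diagram 1. Since the hypotheses here (perfect, Hausdorff, Baire, countable $\pi$-base) subsume those of each individual result, every arrow in those results is available to me. My first task is to observe that Hausdorff implies $T_1$, and that a Baire space is of the second category, so all the earlier theorems apply verbatim under the standing assumptions of the corollary.

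The core of the argument is to exhibit a single cycle of implications that forces all six transitivity/orbit conditions to be equivalent. First I would record the ``free'' implications from Diagram 1, valid for any dynamical system: $\text{DO}_{++} \to \text{DO}_+ \to \text{DO}$, and $\text{TT}_{++} \to \text{TT}_+ \to \text{TT} \leftrightarrow \text{IN}$, plus the vertical arrows $\text{DO}_{++}\to\text{TT}_{++}$ and $\text{DO}\to\text{TT}$. Then I would close the loop using the quasi-continuous results: Theorem \ref{thm:transitivity} gives $\text{TT}\to\text{TT}_+\to\text{TT}_{++}$ (so the three TT-properties collapse); Theorem \ref{thm:equivalence} gives $\text{TT}_+\to\text{DO}_+$; and Proposition \ref{prop:dense-orbit} gives $\text{DO}_+\to\text{DO}_{++}$. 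Concretely, starting from $\text{TT}$ I reach $\text{TT}_+$ (Theorem \ref{thm:transitivity}), then $\text{DO}_+$ (Theorem \ref{thm:equivalence}), then $\text{DO}_{++}$ (Proposition \ref{prop:dense-orbit}), and finally back to $\text{TT}_{++}\to\text{TT}_+\to\text{TT}$ via the diagram and Theorem \ref{thm:transitivity}. This single cycle $\text{TT}\to\text{TT}_+\to\text{DO}_+\to\text{DO}_{++}\to\text{TT}_{++}\to\text{TT}$ visits all six conditions, so each implies every other, establishing their mutual equivalence.

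For the two extra characterizations, I would argue that they are each sandwiched between conditions already shown equivalent. The conclusion of Theorem \ref{thm:equivalence} states that under $\text{TT}_+$ (with $X$ Baire) $\text{Trans}_f$ contains a dense $G_\delta$-set; since containing a dense $G_\delta$-set trivially forces $\text{Trans}_f\ne\emptyset$, i.e. $\text{DO}_+$, condition (1) lies between $\text{TT}_+$ and $\text{DO}_+$ and hence joins the equivalence class. Likewise, Theorem \ref{thm:dense_orbit} shows that $\text{DO}_{++}$ is equivalent to the statement that $\{x : \omega f(x)=X\}$ contains a dense $G_\delta$-set (its conditions (1) and (3)); so condition (2) is literally equivalent to $\text{DO}_{++}$, which is already in the class. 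Thus both (1) and (2) are equivalent to the common property.

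The assembly itself is routine bookkeeping; the only point requiring care is verifying that the hypothesis sets match at each invocation, so I would state explicitly at the outset that ``Hausdorff $\Rightarrow T_1$'' and ``Baire $\Rightarrow$ second category'' are what let me apply Proposition \ref{prop:dense-orbit} and Theorem \ref{thm:equivalence} respectively. I do not anticipate a genuine obstacle here, since no new dynamical content is introduced; the work has all been done in the preceding results, and the corollary merely records that under the strongest common hypotheses the implication graph becomes a single strongly connected component. The mildest subtlety is ensuring the two $G_\delta$ statements are correctly paired with $\text{Trans}_f$ (which tracks $\text{DO}_+$) and with the omega-limit set (which tracks $\text{DO}_{++}$) respectively, rather than being conflated.
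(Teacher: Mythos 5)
Your proposal is correct and matches the paper's intended argument: the paper gives no separate proof, simply citing Theorems \ref{thm:transitivity}--\ref{thm:equivalence}, Proposition \ref{prop:dense-orbit} and the general implications of Diagram 1, which is exactly the assembly you carry out (with the hypothesis checks Hausdorff $\Rightarrow T_1$ and Baire $\Rightarrow$ second category, and with Theorem \ref{thm:dense_orbit} supplying characterization (2) and Theorem \ref{thm:equivalence} supplying characterization (1)). One small imprecision: your cycle ${\rm TT}\to{\rm TT}_+\to{\rm DO}_+\to{\rm DO}_{++}\to{\rm TT}_{++}\to{\rm TT}$ visits only five of the six conditions, not all six; DO still joins the equivalence class, but only because it is sandwiched by the free implications ${\rm DO}_+\to{\rm DO}\to{\rm TT}$ that you recorded earlier, and this should be said explicitly.
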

    
    To conclude this section, we summarize the relationships between the seven properties IN, TT, TT$_{+}$, 
    TT$_{++}$, DO, DO$_{+}$ and DO$_{++}$ in a quasi-continuous dynamical system by combining Diagram 1, 
    Theorems \ref{thm:transitivity} and \ref{thm:equivalence} into the following figure.
    
    \[
    \begin{tikzpicture}
    
    \node (a) at (-2,2) {DO$_{++}$};
    \node (b) at (0,2)  {DO$_+$};
    \node (c) at (2,2)  {DO};
    \graph {(a) -> (b) -> (c)};
    \graph{(b) -> [bend right] (a)};
    
    \node at (-1, 2.8) {perfect $T_1$};
    
    \node (d) at (-2, 0.2) {TT$_{++}$};
    \node (e) at (0, 0.2) {TT$_{+}$};
    \node (f) at (2, 0.2) {TT};
    \node (g) at (4, 0.2) {IN};
    \graph {(d) -> (e) -> (f) <-> (g)};
    \graph{(f) -> [bend left] (e)};
    \graph{(e) -> [bend left] (d)};
    
    \node at (-0.8, -0.6) {perfect $T_2$};
    \node at (1.2, -0.6) {perfect $T_2$};
    
    \draw[->] (-2,1.5) -- (-2,0.7);
    \draw[->] (2,1.5) -- (2,0.7);
    \draw[->] (-0.2,0.7) -- (-0.2,1.5);
    
    \node at (0.5, 1.4) {c.$\pi$.b.};
    \node at (0.7, 1.0) {2nd cat.};
    
    \node (h) at (0.5, -1.5) {Diagram 2.};
    
    \end{tikzpicture}
    \]
    
    \section{The case of imperfect phase spaces}
    
    In this section, we discuss what happens in a quasi-continuous dynamical system $(X,f)$,
    when the phase space $X$ contains isolated points. Throughout this section, we 
    assume that $X$ is a Hausdorff space.
    
    Let ${\rm Iso}_X$ be the set of isolated points in $X$. Similar to the analysis in Section 5 of 
    \cite{akin-carlson:2012}, we discuss how ${\rm Iso}_X$ sits in $X$ by analyzing the preimages of 
    isolated points and then see what role ${\rm Iso}_X$ plays in the study of topological transitivity. 
    Through this analysis, we derive a conclusion similar to that of \cite{akin-carlson:2012} 
    but our results are an extension of those in Section 5 of \cite{akin-carlson:2012}.
    
    \begin{lemma} \label{lem:null_preimage}
    If $(X, f)$ satisfies {\rm TT}, then there is at most one point $x\in {\rm Iso}_X$ such that 
    $f^{-1}(x) =\emptyset$.
    \end{lemma}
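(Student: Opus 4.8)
The plan is to argue by contradiction. Suppose, to the contrary, that there exist two \emph{distinct} points $x_1, x_2 \in {\rm Iso}_X$ with $f^{-1}(x_1) = \emptyset$ and $f^{-1}(x_2) = \emptyset$. Because $x_1$ and $x_2$ are isolated, the singletons $\{x_1\}$ and $\{x_2\}$ are nonempty open subsets of $X$, so the TT property applies to them directly and yields $N(\{x_1\}, \{x_2\}) \neq \emptyset$.

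Next I would invoke the decomposition $N(\{x_1\}, \{x_2\}) = N_+(\{x_1\}, \{x_2\}) \cup N_+(\{x_2\}, \{x_1\})$ recorded in Section \ref{sec:notion}, so that at least one of the two forward hitting time sets is nonempty. Suppose first that some $m \in N_+(\{x_1\}, \{x_2\})$; as these are singletons this means $f^m(x_1) = x_2$. The index $m = 0$ is excluded, since it would force $x_1 = x_2$, so $m \geq 1$, whence $f^{m-1}(x_1) \in f^{-1}(x_2)$, contradicting $f^{-1}(x_2) = \emptyset$. The remaining case, that some $m \in N_+(\{x_2\}, \{x_1\})$, is completely symmetric and contradicts $f^{-1}(x_1) = \emptyset$. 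Since both cases are impossible, no such pair $x_1, x_2$ can exist, which is the assertion.

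I do not expect a genuine obstacle here: the entire argument hinges on the single observation that an isolated point furnishes an open singleton, to which TT can be applied. The only points requiring a little care are the exclusion of the degenerate index $m = 0$ (handled purely by the assumed distinctness of $x_1$ and $x_2$) and the correct reading of a negative hitting time as a forward statement about the other singleton via the $N_+$ decomposition. It is worth noting that, for this particular lemma, neither quasi-continuity of the iterates nor the Hausdorff hypothesis of this section is actually needed.
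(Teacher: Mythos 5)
Your proof is correct, but it takes a different route from the paper's. The paper does not argue pointwise on singletons: it splits according to whether $f(X)$ is dense in $X$. If $f(X)$ is dense, every isolated point meets $f(X)$ (its singleton is open), so no isolated point has empty preimage. If $f(X)$ is not dense, the paper considers the nonempty open set $X \setminus \overline{f(X)}$ and uses TT together with the standing Hausdorff assumption of that section to show this set is a single point: two distinct points of $X \setminus \overline{f(X)}$ could be separated by disjoint open subsets $U, V$ of that set, and then $N(U,V)$ would be empty, since $U \cap V = \emptyset$ rules out $n=0$ while $f^n(U) \subseteq f(X)$ rules out $n \geq 1$ (and symmetrically). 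Since any isolated point with empty preimage avoids $\overline{f(X)}$, it must be that single point. Your argument instead applies TT directly to the open singletons $\{x_1\}$ and $\{x_2\}$ of two hypothetical isolated points with empty preimages, uses the recorded decomposition $N(A,B) = N_+(A,B) \cup N_+(B,A)$, and derives a contradiction from $f^{m-1}(x_i)$ being a preimage point. The trade-off: your proof is more elementary and, as you correctly observe, needs neither quasi-continuity nor the Hausdorff hypothesis, since isolated points supply disjoint open sets for free; the paper's proof uses Hausdorff but establishes the structurally stronger fact that $X \setminus \overline{f(X)}$ contains at most one point of the whole space, not merely at most one isolated point with empty preimage.
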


    \begin{proof}
    If $f(X)$ is dense in $X$, then $f^{-1}(x)\ne \emptyset$ for any $x \in {\rm Iso}_X$. Assume that
    $f(X)$ is not dense in $X$. Then $X \setminus \overline{f(X)}$ is nonempty open. By an argument 
    similar to that of Corollary \ref{coro:equivalence}, we can show that $X \setminus \overline{f(X)}$ 
    contains only one point $x$. Then $x$ must be an isolated point such that $f^{-1}(x) =\emptyset$.
    \end{proof}
    
    
    The following characterization of a quasi-continuous mapping between two topological spaces is 
    useful.
    
    \begin{lemma}[\cite{Neubrunn:1988}, \cite{Neubrunnova:1973}] \label{lem:charac}
    Given topological spaces $X$ and $Y$, a mapping $f: X \to Y$ is quasi-continuous 
    on $X$ if, and only if, 
    \[
    f^{-1}(V) \subseteq \overline{{\rm int} \left(f^{-1}(V)\right)}
    \]
    for any open subset $V$ of $Y$.
    \end{lemma}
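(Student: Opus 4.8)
The plan is to unpack the paper's pointwise definition of quasi-continuity and translate it directly into the stated closure condition, proving the two implications separately while carefully tracking which open set serves as a neighbourhood of which point. No hypotheses on $X$ or $Y$ beyond being topological spaces are needed, since the equivalence is essentially a reformulation of the definition.

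For the forward implication, I would assume $f$ is quasi-continuous and fix an arbitrary open set $V \subseteq Y$. Taking any $x_0 \in f^{-1}(V)$, the set $V$ is an open neighbourhood of $f(x_0)$, so the goal is to show $x_0 \in \overline{{\rm int}(f^{-1}(V))}$; equivalently, that every open neighbourhood $U$ of $x_0$ meets ${\rm int}(f^{-1}(V))$. Applying quasi-continuity at $x_0$ with the target neighbourhood $W = V$ yields a nonempty open subset $V' \subseteq U$ with $f(V') \subseteq V$. Then $V'$ is an open subset of $f^{-1}(V)$, hence $V' \subseteq {\rm int}(f^{-1}(V))$, and $V'$ witnesses $U \cap {\rm int}(f^{-1}(V)) \ne \emptyset$. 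Since $x_0$ and $U$ were arbitrary, the inclusion $f^{-1}(V) \subseteq \overline{{\rm int}(f^{-1}(V))}$ follows.

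For the reverse implication, I would assume the closure inclusion holds for every open $V \subseteq Y$, fix a point $x_0$, an open neighbourhood $W$ of $f(x_0)$, and an open neighbourhood $U$ of $x_0$. Setting $V = W$, we have $f(x_0) \in W$, so $x_0 \in f^{-1}(W) \subseteq \overline{{\rm int}(f^{-1}(W))}$; consequently $U$ meets ${\rm int}(f^{-1}(W))$. The set $V' := U \cap {\rm int}(f^{-1}(W))$ is then a nonempty open subset of $U$, and since $V' \subseteq {\rm int}(f^{-1}(W)) \subseteq f^{-1}(W)$ we get $f(V') \subseteq W$. This is exactly quasi-continuity of $f$ at $x_0$, and as $x_0$ was arbitrary, $f$ is quasi-continuous on $X$.

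The only real subtlety is bookkeeping: in both directions the crucial move is to take the target open set $V$ (equivalently $W$) to play both roles at once, and to exploit that any open subset of $f^{-1}(V)$ automatically lands inside ${\rm int}(f^{-1}(V))$. There is no genuine obstacle — neither separation axioms nor any category or Baire-type assumptions enter — so the argument is a direct dualization of the two conditions through the interior/closure operators.
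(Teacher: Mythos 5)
Your proof is correct: both implications are verified exactly as they should be, with the key observations (any open subset of $f^{-1}(V)$ lies in ${\rm int}(f^{-1}(V))$, and $U \cap {\rm int}(f^{-1}(W))$ itself serves as the witness set) in place and no hidden hypotheses needed. The paper gives no proof of this lemma at all --- it is quoted from \cite{Neubrunn:1988} and \cite{Neubrunnova:1973} --- and your argument is the standard direct unpacking of the definition, so there is nothing further to compare.
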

    
    \begin{lemma} \label{lem:periodic_iso}
    Let $(X,f)$ be a quasi-continuous dynamical system satisfying {\rm TT}.
    If $x \in {\rm Iso}_X$ and $|f^{-1}(x)| >1$, then $x$ is periodic and $|f^{-1}(x)|=2$. Thus
    $f^{-1}(x)$ is a finite open set consisting entirely of isolated points.
    \end{lemma}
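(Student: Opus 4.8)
The plan is to proceed in three stages: first show that $x$ is periodic, then bound $|f^{-1}(x)|$ above by $2$, and finally deduce the structural statement about $f^{-1}(x)$. Throughout I exploit that $\{x\}$ is open, since $x$ is isolated.

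For periodicity, I would use $|f^{-1}(x)| > 1$ to pick distinct $y_1, y_2 \in f^{-1}(x)$ and, since $X$ is Hausdorff, disjoint open neighbourhoods $U_1 \ni y_1$ and $U_2 \ni y_2$. Applying Lemma \ref{lemma:key} to each $U_i$ and the open set $\{x\}$ yields nonempty open sets $V_i \subseteq U_i \cap f^{-1}(x)$, so that $f(V_i) = \{x\}$ while $V_1 \cap V_2 = \emptyset$. Now {\rm TT} gives some $m \ge 1$ (nonzero because $V_1, V_2$ are disjoint) with, say, $f^m(V_1) \cap V_2 \ne \emptyset$. Since $f(V_1) = \{x\}$ we have $f^m(V_1) = \{f^{m-1}(x)\}$, whence $f^{m-1}(x) \in V_2 \subseteq f^{-1}(x)$ and therefore $f^m(x) = x$. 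Thus $x$ is periodic; let $p$ be its period and write its finite orbit as ${\rm Orb}_f(x) = \{x, f(x), \dots, f^{p-1}(x)\}$.

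The crucial step is the bound $|f^{-1}(x)| \le 2$, which I expect to be the main obstacle. The key observation is that $f$ restricts to a cyclic permutation of the finite cycle ${\rm Orb}_f(x)$, so it is injective there and exactly one orbit point, namely $f^{p-1}(x)$, lies in $f^{-1}(x)$. Hence if $|f^{-1}(x)| \ge 3$, at least two preimages $a, b$ lie off the orbit. I would separate $a$ and $b$ by disjoint open sets (Hausdorff), delete from each the finite, closed set ${\rm Orb}_f(x)$, and shrink via Lemma \ref{lemma:key} to obtain disjoint nonempty open $A, B \subseteq f^{-1}(x)$ that are also disjoint from ${\rm Orb}_f(x)$. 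The collapsing is now decisive: for every $m \ge 1$ we have $f^m(A) = \{f^{m-1}(x)\} \subseteq {\rm Orb}_f(x)$, and symmetrically for $B$, so $f^m(A) \cap B = f^m(B) \cap A = \emptyset$. Together with $A \cap B = \emptyset$ this gives $N(A,B) = \emptyset$, contradicting {\rm TT}. Hence $|f^{-1}(x)| \le 2$, and with the hypothesis $|f^{-1}(x)| > 1$ this forces $|f^{-1}(x)| = 2$.

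Finally, for the structural claim I would invoke Lemma \ref{lem:charac} with $V = \{x\}$ to obtain $f^{-1}(x) \subseteq \overline{{\rm int}(f^{-1}(x))}$. Setting $O = {\rm int}(f^{-1}(x))$, the set $O \subseteq f^{-1}(x)$ is finite, hence closed in the $T_1$ space $X$, so $\overline{O} = O$ and therefore $f^{-1}(x) = O$ is open. A two-point open subset of a Hausdorff space splits into two isolated singletons, yielding the conclusion. As flagged, the genuine difficulty is the counting step: the insight that makes it work is that quasi-continuity produces \emph{honest} open subsets of $f^{-1}(x)$, and these necessarily collapse onto the finite orbit, so any extra off-orbit preimage is incompatible with the two-sided hitting condition {\rm TT}.
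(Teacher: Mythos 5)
Your proof is correct, and while its first and last steps essentially coincide with the paper's, your counting step takes a genuinely different route. Like the paper, you establish periodicity by using quasi-continuity (you invoke Lemma \ref{lemma:key} against the open singleton $\{x\}$; the paper uses the equivalent Lemma \ref{lem:charac}) to manufacture disjoint nonempty open subsets of $f^{-1}(x)$ around two distinct preimages, then apply {\rm TT} and the collapse $f(V)=\{x\}$ to get $f^m(x)=x$; and at the end both proofs deduce openness of $f^{-1}(x)$ from $f^{-1}(x)\subseteq\overline{{\rm int}\left(f^{-1}(x)\right)}$ together with finiteness and Hausdorffness. The divergence is in proving $|f^{-1}(x)|\le 2$. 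The paper introduces a third nonempty open set $W\subseteq f^{-1}(x)$ disjoint from the original $U$ and $V$ (``shrinking $U$ and $V$ if necessary''), applies {\rm TT} to the pair $(U,W)$, argues that the hitting time $m$ must be a multiple of the period $k$, and reaches the contradiction $f^m(z)=f^{k-1}(x)\in V$ while $f^m(z)\in W$; this requires tracking exactly which cycle point the image lands on, and in particular tacitly requires that the shrinking preserve both $f^{k-1}(x)\in V$ and the role of $k$. You instead observe that only $f^{p-1}(x)$ among the cycle points can map to $x$, so two of the three putative preimages lie off the orbit; building open $A,B\subseteq f^{-1}(x)$ disjoint from the finite (hence closed) orbit, you get $f^m(A)\cup f^m(B)\subseteq{\rm Orb}_f(x)$ for all $m\ge 1$, whence $N(A,B)=\emptyset$ in both time directions --- a direct violation of {\rm TT}, with no modular arithmetic and no bookkeeping about where images land. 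Your mechanism is cleaner and more robust, and in fact sidesteps the delicate shrinking step in the paper's own argument; the only thing the paper's version yields beyond yours is the incidental identification of the period of $x$ with the minimal element of $N_+(U,V)$, which the statement of the lemma does not need.
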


    \begin{proof}
    If $x \in {\rm Iso}_X$, by Lemma \ref{lem:charac} $f^{-1}(x)\subseteq\overline{{\rm int}f^{-1}(x)}$. 
    In addition, if $|f^{-1}(x)| >1$ also holds, there are two distinct points $y, z \in f^{-1}(x)$.
    Since $X$ is Hausdorff, we can choose two disjoint open subsets $U'$ and $V'$ in $X$ such that
    $y \in U'$ and $z\in V'$. Put $U := U'\cap {\rm int}f^{-1}(x)$ and $V := V'\cap {\rm int}f^{-1}(x)$.
    Then, $U$ and $V$ are nonempty open subsets in $X$ such that $U \cap V =\emptyset$ and $U \cup V 
    \subseteq f^{-1}(x)$. Further, as $(X,f)$ satisfies TT, we can require $N_+(U, V) \ne \emptyset$ 
    without loss of generality.
    
    Let $k$ be the smallest element of $N_+(U,V)$. Since $U\cap V =\emptyset$, $k>0$. Choose $y \in U$
    such that $f^k(y) \in V$. Then, $x=f(y)$ and $f^k(y) \in f^{-1}(x)$. It follows that
    \[
    x = f(f^k(y)) = f^k(f(y)) =f^k(x).
    \]
    Thus $x$ is periodic. Indeed, by the minimality of $k$, we conclude that $k$ is the period of $x$ 
    and the forward orbit of $x$ is 
    \[
    {\rm Orb}_f(x) =\{x, f(x), f^2(x), \cdots, f^{k-1}(x) \}.
    \]
    
    Suppose $|f^{-1}(x)| >2$. Then, by shrinking $U$ and $V$ if necessary, we can choose a nonempty 
    open subset $W$ of $X$, which is disjoint from both $U$ and $V$, such that 
    \[
    U\cup V \cup W \subseteq f^{-1}(x). 
    \]
    Since $(X,f)$ satisfies TT, there exists an integer $m\in \N$ such that 
    \[
    m \in N_+(U, W) \cup N_+(W, U).
    \] 
    Without loss of generality, let $m \in N_+(U, W)$. The disjointness of $U$ and $W$ implies $m>0$. 
    We choose a point $z \in U$ such that $f^m(z) \in W$. Similar to the previous argument, we have
    \[
    x =f(f^m(z)) = f^m(f(z)) = f^m(x).
    \]
    Hence $m$ must be a multiple of $k$. This implies that
    \[
    f^m(z) = f^{m-1}(x) =f^{k-1}(x) \in V,
    \]
    which contradicts the fact that $f^m(z) \in W$.
    
    Finally, since the cardinality of $f^{-1}(x)$ is 0, 1 or 2, then $f^{-1}(x)$ is a finite set. 
    If it has cardinality 1 or 2,  then ${\rm int}\left(f^{-1}(x)\right)$ nonempty in addition to $X$  
    Hausdorff (in the case of cardinality 2) implies that every point of $f^{-1}(x)$ is isolated. 
    \end{proof}

    \begin{lemma} \label{lem:preimage_two}
    Let $(X,f)$ be a quasi-continuous dynamical system satisfying {\rm TT}. Then there is at most one 
    $x \in {\rm Iso}_X$ such that $|f^{-1}(x)|=2$.
    \end{lemma}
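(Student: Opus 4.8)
The plan is to argue by contradiction. Suppose there were two \emph{distinct} points $x_1, x_2 \in {\rm Iso}_X$ with $|f^{-1}(x_1)| = |f^{-1}(x_2)| = 2$. By Lemma \ref{lem:periodic_iso}, each $x_i$ is periodic, say with period $k_i \ge 1$, so that ${\rm Orb}_f(x_i)$ is a finite cycle $O_i$, and each fiber $f^{-1}(x_i)$ consists of exactly two isolated points. Within the cycle $O_i$, the unique predecessor of $x_i$ is $f^{k_i - 1}(x_i)$, which certainly lies in $f^{-1}(x_i)$; hence exactly one element of $f^{-1}(x_i)$ lies on $O_i$ and the other, which I will call $a_i$, satisfies $a_i \notin O_i$ while $f(a_i) = x_i$. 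The existence and uniqueness of $a_i$ is the point of extracting the off-orbit preimage.

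Since $a_1$ and $a_2$ are isolated, the singletons $\{a_1\}$ and $\{a_2\}$ are nonempty open subsets of $X$. Applying the ${\rm TT}$ property to this pair yields $N(\{a_1\}, \{a_2\}) \ne \emptyset$, and from the decomposition $N(\{a_1\},\{a_2\}) = N_+(\{a_1\},\{a_2\}) \cup N_+(\{a_2\},\{a_1\})$ there is an integer $n \ge 0$ with either $f^n(a_1) = a_2$ or $f^n(a_2) = a_1$. The two possibilities are symmetric, so I would treat the first and transcribe the conclusion to the second.

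First I would dispose of $n = 0$: it gives $a_1 = a_2$, whence $x_1 = f(a_1) = f(a_2) = x_2$, contradicting $x_1 \ne x_2$ (equivalently, the fibers $f^{-1}(x_1)$ and $f^{-1}(x_2)$ are disjoint). For $n \ge 1$, using $f(a_1) = x_1$ I rewrite $a_2 = f^n(a_1) = f^{n-1}(x_1)$, so that $a_2 \in {\rm Orb}_f(x_1) = O_1$. Applying $f$ once more gives $x_2 = f(a_2) \in O_1$, so $x_2$ lies on the cycle $O_1$; since every point of a periodic cycle has that cycle as its orbit, this forces $O_2 = O_1$ and hence $a_2 \in O_1 = O_2$. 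This contradicts the defining property $a_2 \notin O_2$, and completes the argument.

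The main obstacle is the bookkeeping of which preimage lies on the periodic orbit and which does not: the contradiction hinges on choosing $a_i$ to be precisely the \emph{off-orbit} preimage in each fiber, so that pushing $a_i$ forward one step lands on the cycle $O_i$ and collapses the two orbits into one. Everything else — the openness of the singletons, the disjointness of the fibers, and the symmetry of the two subcases — is routine once Lemma \ref{lem:periodic_iso} has supplied periodicity together with the two-point isolated fibers.
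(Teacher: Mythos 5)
Your proposal is correct and follows essentially the same route as the paper's proof: extract the off-orbit preimage from each two-point fiber (isolated by Lemma \ref{lem:periodic_iso}), apply {\rm TT} to the two open singletons, and rewrite $f^n(a_1)=f^{n-1}(x_1)$ to force the off-orbit point onto a periodic cycle, a contradiction. The only cosmetic difference is that the paper first invokes {\rm TT} to assert ${\rm Orb}_f(x)={\rm Orb}_f(x')$ before deriving the contradiction, whereas you obtain the equality of the two cycles as a byproduct of the same computation, which is a slight streamlining rather than a different argument.
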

    
    \begin{proof}
    Suppose that there are two distinct points $x, x' \in {\rm Iso}_X$ such that 
    \[
    |f^{-1}(x)|=|f^{-1}(x')|=2. 
    \]
    By Lemma \ref{lem:periodic_iso}, both $x$ and $x'$ are periodic, and ${\rm Orb}_f(x) = {\rm Orb}_f(x')$
    since $(X, f)$ satisfies TT. Note that there are points $y$ and $y'$ such that 
    $y\in f^{-1}(x) \setminus {\rm Orb}_f(x)$ and $y'\in f^{-1}(x') \setminus {\rm Orb}_f(x')$.
    By Lemma \ref{lem:periodic_iso}, both $y$ and $y'$ are isolated. Since $(X,f)$ satisfies TT,
    $N(\{y\}, \{y'\}) \ne \emptyset$ and there is $m \in \N$ such that $f^m(y)=y'$ without loss of
    generality. Since $x\ne x'$ and $y \in f^{-1}(x)$, we have $m>0$ and 
    \[
    y' = f^m(y) = f^{m-1} (f(y)) = f^{m-1}(x),
    \]
    which contradicts 
    \[
    y' \not \in {\rm Orb}_f(x') = {\rm Orb}_f(x).
    \]
    Hence there is at most one isolated point $x$ in $X$ with $|f^{-1}(x)| =2$.
    \end{proof}

    Lemmas \ref{lem:null_preimage}, \ref{lem:periodic_iso} and \ref{lem:preimage_two} tell us that 
    in a quasi-continuous dynamical system $(X,f)$ with the property TT, the cardinality of the preimage 
    of any isolated point is 0, 1 or 2. Further, such a system has at most one isolated point with 
    empty preimage and at most one isolated point whose preimage contains exactly two points.
    
    \begin{proposition} \label{prop:trans_point}
    Let $(X,f)$ be a quasi-continuous dynamical system with isolated points. If $(X,f)$ satisfies {\rm TT}, 
    then ${\rm Trans}_f \subseteq {\rm Iso}_X$.
    \end{proposition}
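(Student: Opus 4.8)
The plan is to show directly that every transitive point is isolated, by tracing its orbit backwards from an isolated point that it is forced to visit. First I would record the easy observation that makes this possible: if $x\in{\rm Trans}_f$ then ${\rm Orb}_f(x)$ is dense, and since $X$ is assumed to have isolated points we may fix some $p\in{\rm Iso}_X$; because $\{p\}$ is a nonempty open set, density of the orbit yields an $m\in\N$ with $f^m(x)=p$. Thus the orbit of any transitive point meets ${\rm Iso}_X$, and the whole problem reduces to propagating ``isolatedness'' backwards along the finite chain $x,f(x),\dots,f^m(x)=p$.

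The engine of the argument, and the step I expect to be the main obstacle, is the claim that the full preimage $f^{-1}(q)$ of an isolated point $q$ consists entirely of isolated points. By Lemma \ref{lem:periodic_iso}, the TT hypothesis forces $|f^{-1}(q)|\le 2$, and when $|f^{-1}(q)|=2$ the same lemma already identifies $f^{-1}(q)$ as a finite open set of isolated points, so that case is settled. The delicate case is $|f^{-1}(q)|=1$, say $f^{-1}(q)=\{w\}$: here I would invoke the quasi-continuity characterization of Lemma \ref{lem:charac} with the open set $V=\{q\}$, giving $f^{-1}(\{q\})\subseteq\overline{{\rm int}(f^{-1}(\{q\}))}$, i.e. $\{w\}\subseteq\overline{{\rm int}\{w\}}$. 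Since ${\rm int}\{w\}$ is either empty or equal to $\{w\}$, the containment forces ${\rm int}\{w\}=\{w\}$, so $w$ is isolated. This is the one place where quasi-continuity of $f$ (rather than the bare TT hypothesis and Lemmas \ref{lem:null_preimage}, \ref{lem:preimage_two}) is genuinely essential.

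With the claim in hand, the conclusion follows by downward induction along the orbit. Starting from $f^m(x)=p\in{\rm Iso}_X$, suppose $f^{j+1}(x)$ is isolated for some $0\le j<m$; then $f^{j}(x)$ lies in the nonempty set $f^{-1}(f^{j+1}(x))$, so by the claim $f^{j}(x)$ is isolated as well. Iterating down to $j=0$ gives $x=f^0(x)\in{\rm Iso}_X$, which is exactly the desired inclusion ${\rm Trans}_f\subseteq{\rm Iso}_X$. I note that no minimality of $m$ is needed: any single occurrence of an isolated point in the orbit suffices to pull isolatedness all the way back to the base point $x$.
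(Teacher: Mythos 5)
Your proof is correct and follows essentially the same route as the paper: the orbit of a transitive point must hit an isolated point, and isolatedness is then pulled back along the finite orbit segment by showing that preimages of isolated points are isolated. You are in fact slightly more careful than the paper, which cites Lemma \ref{lem:periodic_iso} alone even though its statement formally covers only the case $|f^{-1}(x)|>1$; your explicit handling of the singleton-preimage case via Lemma \ref{lem:charac} is precisely the argument contained in the final paragraph of that lemma's proof, so the two arguments coincide in substance.
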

    
    \begin{proof}
    Let $y \in {\rm Trans}_f$. Then ${\rm Orb}_f(y)$ is dense in $X$. Let $x$ be any isolated point. 
    Then there is an integer $k \in \N$ such that $x = f^k(y)$. If $k=0$, then $y=x \in {\rm Iso}_X$. 
    So we assume that $k>0$. This implies that $y \in f^{-k}(x)$. Applying Lemma 
    \ref{lem:periodic_iso} in finitely many steps, we conclude that $y \in {\rm Iso}_X$. Hence
    ${\rm Trans}_f \subseteq {\rm Iso}_X$.
    \end{proof}
    
    Proposition \ref{prop:trans_point} tells us that we should search for transitive points within those
    isolated points. We shall achieve this in the next result.
    
    \begin{theorem} \label{thm:imperfect-equivalence}
    Let $(X,f)$ be a quasi-continuous dynamical system with isolated points. Assume that $(X,f)$ 
    satisfies {\rm TT}.
    
    \medskip
    \begin{enumerate}
    \item If there is a (unique) point $x \in {\rm Iso}_X$ such that $f^{-1}(x) =\emptyset$, then $(X,f)$
    satisfies {\rm DO$_+$} and $x$ is the unique transitive point. In this case, none of {\rm TT$_+$, 
    TT$_{++}$} or {\rm DO$_{++}$} hold.
    
    \item If $|f^{-1}(x)| =1$ for every $x \in {\rm Iso}_X$, then either all of {\rm DO$_+$, TT$_+$, 
    TT$_{++}$} and {\rm DO$_{++}$} hold, or none of them hold. 
    
    \item If $f^{-1}(z) \ne \emptyset$ for every point $z \in {\rm Iso}_X$ and there is a (unique) 
    point $x \in {\rm Iso}_X$ such that $|f^{-1}(x)|=2$, then none of {\rm DO$_+$, TT$_+$, 
    DO$_{++}$} and {\rm TT$_{++}$} hold.
    \end{enumerate}
    \end{theorem}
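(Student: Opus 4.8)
The plan is to treat the three cases separately, relying throughout on the preimage bookkeeping of Lemmas \ref{lem:null_preimage}, \ref{lem:periodic_iso} and \ref{lem:preimage_two}, on Proposition \ref{prop:trans_point}, and on the implications of Diagram 1. Two standing observations will be used repeatedly. First, for any isolated point $a$ the singleton $\{a\}$ is open, so {\rm TT$_+$} applied to $U=\{a\}$ forces ${\rm Orb}_f(a)$ to be dense; hence, in the presence of an isolated point, {\rm TT$_+$} already yields {\rm DO$_+$}. Second, a dense orbit must meet every isolated point, since each such singleton is nonempty open. The main difficulty will be the \emph{finiteness reductions} in Cases 2 and 3, namely showing that a dense orbit forces $X$ itself to be finite.

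For Case 1, let $x$ be the unique isolated point with $f^{-1}(x)=\emptyset$, so that $x\notin f(X)$; as in the proof of Lemma \ref{lem:null_preimage}, $X\setminus\overline{f(X)}=\{x\}$. I would first show $x$ is transitive: given a nonempty open $V$, {\rm TT} gives $N(\{x\},V)=N_+(\{x\},V)\cup N_+(V,\{x\})\neq\emptyset$. If $N_+(\{x\},V)\neq\emptyset$ then $V$ meets ${\rm Orb}_f(x)$; if instead $m\in N_+(V,\{x\})$, then $x\in f^m(V)$, which for $m\geq 1$ forces the impossible $x\in f(X)$, while $m=0$ gives $x\in V$ and again $V$ meets ${\rm Orb}_f(x)$ (as $x=f^0(x)$). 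So ${\rm Orb}_f(x)$ is dense and {\rm DO$_+$} holds; uniqueness follows since any transitive $y$ satisfies $x\in{\rm Orb}_f(y)$, and $x=f^k(y)$ with $k\geq 1$ would put $x\in f(X)$, whence $k=0$ and $y=x$. Since $x\notin f(X)$, no orbit can hit $x$ at more than one time, so $x\notin\omega f(y)$ for every $y$ and {\rm DO$_{++}$} fails; taking $U=X\setminus\{x\}$, $V=\{x\}$ gives $N_+(U,\{x\})=\emptyset$, so {\rm TT$_+$} fails, and hence {\rm TT$_{++}$} fails by Diagram 1.

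In Case 2 every isolated point has a singleton preimage, and Lemma \ref{lem:charac} applied to the open set $\{a\}$ shows this preimage is itself isolated. Assuming {\rm DO$_+$} with transitive point $w$ (isolated, by Proposition \ref{prop:trans_point}), its unique preimage $w_{-1}$ is isolated and therefore lies in the dense orbit, say $w_{-1}=f^m(w)$; then $w=f(w_{-1})=f^{m+1}(w)$, so $w$ is periodic. Its orbit is then finite, closed and dense, forcing $X$ finite, and on the resulting periodic cycle $\omega f(w)=X$, so {\rm DO$_{++}$} holds. Together with the chain {\rm DO$_{++}$}$\Rightarrow${\rm TT$_{++}$}$\Rightarrow${\rm TT$_+$} of Diagram 1 and the observation {\rm TT$_+$}$\Rightarrow${\rm DO$_+$}, this closes the loop, so the four properties are pairwise equivalent and either all hold or none do. For Case 3, let $x$ be the unique isolated point with $|f^{-1}(x)|=2$; by Lemma \ref{lem:periodic_iso} it is periodic with finite forward-invariant orbit $C$. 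A transitive $w$ would have a dense orbit meeting $x$, say $x=f^i(w)$, after which the orbit stays in $C$; thus ${\rm Orb}_f(w)$ is finite, and being dense and closed it equals $X$, so $X$ is finite and discrete. But Case 3 forces $f^{-1}(z)\neq\emptyset$ for every point, i.e. $f$ is surjective, hence a bijection of the finite set $X$, so every fiber is a singleton, contradicting $|f^{-1}(x)|=2$. Therefore {\rm DO$_+$} fails; then {\rm TT$_+$} fails (else $x$ would be transitive), {\rm DO$_{++}$} fails (it implies {\rm DO$_+$}), and {\rm TT$_{++}$} fails (it implies {\rm TT$_+$}). The crux in both Cases 2 and 3 is exactly this finiteness step: a single isolated point in the (backward or forward) orbit, combined with the rigid preimage structure from Lemmas \ref{lem:charac} and \ref{lem:periodic_iso}, pins the orbit into a finite invariant cycle, after which everything reduces to Diagram 1 and the surjective-implies-bijective dichotomy on finite sets.
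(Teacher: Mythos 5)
Your proof is correct, and while Case 1 follows essentially the paper's own route (dense orbit of $x$ forced by TT together with $f^{-1}(x)=\emptyset$, uniqueness via $x\notin f(X)$, and an open pair missing $x$ to kill TT$_+$; your separate verification that $x\notin\omega f(y)$ for all $y$ is a harmless redundancy, since the failure of TT$_+$ already rules out DO$_{++}$ via Diagram 1), your Cases 2 and 3 take a genuinely different path. For Case 2 the paper splits into subcases according to whether ${\rm Iso}_X$ contains a periodic point: if so, it shows $X$ equals that finite orbit, so all four properties hold; if not, it exhibits explicit witnesses of failure, namely $N_+(\{x\},f^{-1}(x))=\emptyset$ (killing TT$_+$) and, for each isolated $y$, its isolated preimage $y'\notin\overline{{\rm Orb}_f(y)}$ (killing DO$_+$). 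You instead close the implication cycle DO$_+\Rightarrow$ DO$_{++}\Rightarrow$ TT$_{++}\Rightarrow$ TT$_+\Rightarrow$ DO$_+$, where the only new arrow is the first, obtained from your finiteness step (the transitive point's isolated preimage must lie in its dense orbit, forcing periodicity, hence $X$ finite and a single cycle), and the last arrow is your standing observation that TT$_+$ applied to an isolated singleton already produces a transitive point --- a shortcut the paper never isolates explicitly. For Case 3 the paper argues combinatorially: it shows $N_+(\{x\},\{y\})=\emptyset$ for the extra preimage $y$ of $x$, then proves ${\rm Iso}_X={\rm Orb}_f(x)\cup\{f^{-n}(y):n\in\N\}$ and checks point by point that no isolated point is transitive; you instead run a clean reductio: a transitive point would make $X$ finite, hence discrete (Hausdorff), so the hypothesis $f^{-1}(z)\neq\emptyset$ for all isolated $z$ makes $f$ a surjective self-map of a finite set, hence a bijection, contradicting $|f^{-1}(x)|=2$. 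Your unified finiteness-plus-pigeonhole mechanism is shorter and avoids most of the bookkeeping with Lemmas \ref{lem:periodic_iso} and \ref{lem:preimage_two}; what the paper's longer analysis buys is structural information --- an explicit description of ${\rm Iso}_X$, concrete open pairs witnessing each failure, and (in Case 2) the sharper characterization that the positive properties hold precisely when $X$ is a single finite periodic cycle.
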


    \begin{proof}
    (1) Since $(X,f)$ satisfies TT and $f^{-1}(x) =\emptyset$, for any nonempty open subset $U$ of $X$
    we must have $N_+(\{x\}, U) \ne \emptyset$. This implies that ${\rm Orb}_f(x)$ is dense in $X$ and 
    ${\rm Iso}_X \subseteq {\rm Orb}_f(x)$. Thus $(X,f)$ satisfies DO$_+$ and $x$ is a transitive 
    point.
    
    Since $f^{-1}(x) =\emptyset$, we cannot have $x\in {\rm Orb}_f(y)$ for any $y\in {\rm Iso}_X$ and 
    $y \ne x$. This means that any isolated point distinct from $x$ (if such a point exists) cannot 
    be transitive. Hence, $x$ is the unique transitive point.
    
    Pick any point $y\in X$ but $y \ne x$ ($y$ is not necessarily to be an isolated point). Since $X$ 
    is a Hausdorff space, there is an open set $U$ such that $y \in U$ and $x\not \in U$. As $f^{-1}(x) 
    =\emptyset$, we have $N_+(U, \{x\}) =\emptyset$. Thus, TT$_+$ or any property stronger than TT$_+$ 
    does not hold.
    
    (2) We consider two subcases.
    
    \emph{Subcase 1.} There is a periodic isolated point $x\in {\rm Iso}_X$. Then
    \[
    {\rm Orb}_f(x) =\{ x, f(x), \cdots, f^{k-1}(x)\}
    \]
    for some $k >0$. By Lemma \ref{lem:periodic_iso}, we have ${\rm Orb}_f(x) \subseteq {\rm Iso}_X$. 
    This and the fact that $|f^{-1}(z)|=1$ for every $z \in {\rm Iso}_X$ imply that
    ${\rm Orb}_f(x) =\{ f^{n}(x): n \in \Z\}$.
    Now let $U$ be any nonempty open subset of $X$. Since $(X,f)$ satisfies TT, we have some
    integer $k \in N\left(\{x\}, U \right)$. Then
    \[
    f^{k}(x) \in U \cap \{ f^n(x): n \in \Z\} = U \cap {\rm Orb}_f(x).
    \]
    This implies that ${\rm Orb}_f(x)$ is dense in $X$. Since ${\rm Orb}_f(x)$ is closed in $X$,
    we have $X = {\rm Orb}_f(x)$. Thus, in this subcase, all of DO$_+$, TT$_+$, 
    TT$_{++}$ and DO$_{++}$ hold.
    
    \emph{Subcase 2.} ${\rm Iso}_X$ does not contain any periodic point. Let $x
    \in {\rm Iso}_X$. By Lemma \ref{lem:charac}, $f^{-1}(x)$ is open in $X$. Since $x$ is not
    a periodic point, $N_+(\{x\},  f^{-1}(x)) = \emptyset$. Then TT$_+$ and thus TT$_{++}$ do
    not hold. 
    
    To show that ${\rm Trans}_f =\emptyset$, by Proposition \ref{prop:trans_point},
    it suffices to show that $y \not\in {\rm Trans}_f$ for every $y \in {\rm Iso}_X$. Let $y \in 
    {\rm Iso}_X$. Since $|f^{-1}(y)|=1$, there is a point $y'$ such that $f(y') = y$, and 
    $y' \in{\rm Iso}_X$ by Lemma \ref{lem:periodic_iso}. Since $y$ is not a periodic point, $y' 
    \not \in {\rm Orb}_f(y)$. Thus $y' \not \in \overline{{\rm Orb}_f(y)}$ and hence $y \not \in 
    {\rm Trans}_f$. Therefore ${\rm Trans}_f=\emptyset$. This means that DO$_+$ and thus DO$_{++}$ 
    do not hold.
    
    (3) By Lemma \ref{lem:periodic_iso}, $x$ is a periodic point with period $k >0$. Let
    \[
    {\rm Orb}_f(x) =\{ x, f(x), \cdots, f^{k-1}(x)\}
    \]
    and $f^{-1}(x) = \{ f^{k-1}(x), y\}$ with $y \ne f^{k-1}(x)$. Then $y \in {\rm Iso}_X$ and 
    ${\rm Orb}_f(x) \subseteq {\rm Iso}_X$. If $y = f^m(x)$ for some nonnegative integer $m <k-1$,
    then $x = f^{m+1}(x)$, which contradicts the fact that $k$ is the period of $x$. Thus,
    $y \notin {\rm Orb}_f(x)$.  (As a notational convenience, we identify the singleton $f^{-n}(y)$ with its single element to allow for ease of expression in what follows.) Indeed, by Lemma \ref{lem:periodic_iso}, we have that $f^{-n}(y) 
    \in {\rm Iso}_X \setminus {\rm Orb}_f(x)$ for all $n \in \N$. This implies that 
    $N_+(\{x\}, \{y\}) =\emptyset$. Therefore TT$_+$ and hence TT$_{++}$ do not hold. 
    
    Next we show that DO$_+$ does not hold, that is, ${\rm Trans}_f =\emptyset$. For any point 
    $z\in {\rm Iso}_X$, by the TT property, we have $N(\{z\}, \{y\}) \ne \emptyset$. If $N_+(\{z\}, 
    \{y\}) \ne \emptyset$, then $y=f^m(z)$ for some $m \in \N$. In this case, $z \in \{ f^{-n}(y): 
    n \in \N\}$. If $N_+(\{y\}, \{z\}) \ne \emptyset$, then $z=f^m(y)$ for some $m \in \N$. So, 
    we have either $z=y$ or $z=f^n(x)$ for some $n \in \N$. This means that either $z \in \{ 
    f^{-n}(y): n \in \N\}$ or $z \in  {\rm Orb}_f(x)$ holds. We have just verified that 
    \[
    {\rm Iso}_X = {\rm Orb}_f(x) \cup \{f^{-n} (y): n \in \N\}.
    \]
    Note that no point in ${\rm Orb}_f(x)$ is transitive, as $y \notin {\rm Orb}_f(x)$. Now we
    consider a point $z \in {\rm Iso}_X \setminus {\rm Orb}_f(x)$. Then $z= f^{-n}(y)$ for some 
    $n \in \N$. By Lemma \ref{lem:periodic_iso} and assumption, $f^{-1}(z)$ is an isolated point. 
    Since $f^{-m}(y)\ne f^{-n}(y)$ for all $m\ne n$ and $m, n\in \N$, $f^{-1}(z) \notin 
    {\rm Orb}_f(z)$. It follows that $z \not \in {\rm Trans}_f$. Hence, ${\rm Trans}_f =\emptyset$ 
    by Proposition \ref{prop:trans_point}. Consequently, DO$_+$ and hence DO$_{++}$ do not hold. 
    \end{proof}

    \begin{corollary} \label{coro5.1}
    Let $(X,f)$ be a quasi-continuous dynamical system with isolated points. Then, {\rm DO$_+$, 
    TT$_+$, DO$_{++}$} and {\rm TT$_{++}$} are equivalent.
    \end{corollary}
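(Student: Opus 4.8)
The plan is to read Corollary \ref{coro5.1} off the trichotomy of Theorem \ref{thm:imperfect-equivalence}, letting the ``free'' implications of Diagram~1 do most of the work. First I would observe that each of the four properties forces TT: Diagram~1 gives $\mathrm{TT}_{++}\to\mathrm{TT}_+\to\mathrm{TT}$ and $\mathrm{DO}_{++}\to\mathrm{DO}_+\to\mathrm{DO}\to\mathrm{TT}$. Hence if none of the four holds the asserted equivalence is vacuous, while if at least one holds then TT holds and, by Lemmas \ref{lem:null_preimage}, \ref{lem:periodic_iso} and \ref{lem:preimage_two}, exactly one of the three cases of Theorem \ref{thm:imperfect-equivalence} is in force. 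The remaining task is then to compare the common truth value of the four properties against each case.

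Diagram~1 already supplies $\mathrm{DO}_{++}\to\mathrm{DO}_+$ and $\mathrm{DO}_{++}\to\mathrm{TT}_{++}\to\mathrm{TT}_+$, so $\mathrm{DO}_{++}$ is the strongest of the four and it suffices to show that each of the other three implies it. For $\mathrm{TT}_+\to\mathrm{DO}_{++}$ (and hence $\mathrm{TT}_{++}\to\mathrm{DO}_{++}$): if $\mathrm{TT}_+$ holds, then the system can lie in neither case (1) nor case (3) of Theorem \ref{thm:imperfect-equivalence}, since $\mathrm{TT}_+$ fails in both; thus it lies in case (2), where the validity of $\mathrm{TT}_+$ triggers the ``all hold'' alternative, yielding $\mathrm{DO}_{++}$. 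This already establishes the equivalence of $\mathrm{TT}_+$, $\mathrm{TT}_{++}$ and $\mathrm{DO}_{++}$.

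The delicate remaining step is to fit $\mathrm{DO}_+$ into this chain, i.e. to prove $\mathrm{DO}_+\to\mathrm{DO}_{++}$, and I expect this to be the main obstacle. The trichotomy settles two of the three cases at once: $\mathrm{DO}_+$ is incompatible with case (3) (where it fails), and in case (2) its presence again forces the ``all hold'' alternative, giving $\mathrm{DO}_{++}$. What resists this pattern is case (1): an isolated point $x$ with $f^{-1}(x)=\emptyset$ produces a dense forward orbit and hence $\mathrm{DO}_+$, yet by Theorem \ref{thm:imperfect-equivalence}(1) all of $\mathrm{TT}_+$, $\mathrm{TT}_{++}$ and $\mathrm{DO}_{++}$ fail there. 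Reconciling $\mathrm{DO}_+$ with the other three therefore turns entirely on this degenerate configuration, and I anticipate that the proof must either invoke a hypothesis excluding an isolated point with empty preimage or confine the four-fold equivalence to the settings of cases (2) and (3).
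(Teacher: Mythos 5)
Your reading is exactly the paper's intended derivation: the paper gives no separate argument for Corollary \ref{coro5.1}, which is meant to be read off Theorem \ref{thm:imperfect-equivalence} together with the free implications of Diagram 1, precisely as you do. Your treatment of three of the four properties is complete and correct: each of TT$_+$, TT$_{++}$, DO$_{++}$ implies TT, cases (1) and (3) of the theorem exclude them, and in case (2) the presence of any one of them triggers the ``all hold'' alternative, so TT$_+$, TT$_{++}$ and DO$_{++}$ are mutually equivalent.

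The obstruction you isolate for DO$_+$ is not a defect of your argument but a genuine defect of the statement itself. Theorem \ref{thm:imperfect-equivalence}(1) explicitly asserts that in case (1) the system satisfies DO$_+$ while TT$_+$, TT$_{++}$ and DO$_{++}$ all fail, and case (1) is realizable, even by a continuous system: take $X=\{0\}\cup\{1/n : n\ge 1\}$ with the Euclidean topology, $f(1/n)=1/(n+1)$ and $f(0)=0$. Then every $1/n$ is isolated, $f^{-1}(1)=\emptyset$, the forward orbit of $1$ is dense (so DO$_+$ and hence TT hold), yet $N_+(\{1/2\},\{1\})=\emptyset$, so TT$_+$ fails. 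Thus the corollary as printed contradicts the paper's own Theorem \ref{thm:imperfect-equivalence}(1): what actually follows is the equivalence of TT$_+$, TT$_{++}$ and DO$_{++}$ alone, with DO$_+$ joining them exactly when case (1) is excluded, i.e.\ under the additional hypothesis that $f^{-1}(x)\ne\emptyset$ for every $x\in{\rm Iso}_X$ (the settings of cases (2) and (3)). Your two proposed repairs --- dropping DO$_+$ from the list, or adding that hypothesis --- are precisely the correct ones; no further idea is missing from your proposal, and no proof of the statement as written can exist.
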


    \begin{corollary} \label{coro5.2}
    Let $(X,f)$ be a quasi-continuous dynamical system with isolated points. Then,
    {\rm TT} and {\rm DO} are equivalent.
    \end{corollary}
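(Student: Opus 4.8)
The plan is to establish the two implications separately. One direction is free: for an arbitrary dynamical system, DO $\rightarrow$ TT is part of the general implications recorded in Diagram 1, so neither quasi-continuity nor the presence of isolated points is needed to obtain DO $\Rightarrow$ TT. Thus the entire content of the corollary is the forward implication TT $\Rightarrow$ DO, and this is where the structure theory of isolated points developed in this section enters.

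Assuming TT, I would first invoke Lemmas \ref{lem:null_preimage}, \ref{lem:periodic_iso} and \ref{lem:preimage_two} to note that the preimage of every isolated point has cardinality $0$, $1$ or $2$, with at most one isolated point having empty preimage and at most one having a two-point preimage. Consequently $(X,f)$ falls into exactly one of the three mutually exclusive and exhaustive cases of Theorem \ref{thm:imperfect-equivalence}: (1) some (unique) isolated point $x$ satisfies $f^{-1}(x)=\emptyset$; (2) $|f^{-1}(z)|=1$ for every $z\in{\rm Iso}_X$; or (3) every isolated point has nonempty preimage and a unique isolated point has a two-point preimage. I would then verify DO case by case.

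In the two ``easy'' configurations DO$_+$ already holds, and since DO$_+\rightarrow$ DO this suffices. In Case (1), Theorem \ref{thm:imperfect-equivalence} gives that ${\rm Orb}_f(x)$ is dense; because $f^{-1}(x)=\emptyset$, the sequence $\langle f^n(x):n\ge 0\rangle$ is exactly a one-sided orbit sequence of the type permitted in DO, so DO holds. In the subcase of Case (2) containing a periodic isolated point, $X$ collapses to a single finite periodic orbit and DO is immediate. The real work lies in the two remaining configurations---Case (2) with no periodic isolated point, and Case (3)---where Theorem \ref{thm:imperfect-equivalence} shows ${\rm Trans}_f=\emptyset$, so DO$_+$ genuinely fails and a dense \emph{bi-infinite} orbit sequence must be produced by hand.

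For these cases I would fix a suitable isolated point---any isolated point $x_0$ in Case (2), and the branch point $y\notin{\rm Orb}_f(x)$ in Case (3)---and build a bi-infinite sequence $\langle x_k:k\in\mathbb Z\rangle$ by iterating $f$ forward and passing to iterated preimages backward. The structural input, from Lemma \ref{lem:periodic_iso} together with Lemma \ref{lem:charac}, is that in both cases the iterated preimages $f^{-n}(x_0)$ (respectively $f^{-n}(y)$) are singletons consisting of isolated points and are pairwise distinct, so the backward chain is well defined and $\langle x_k:k\in\mathbb Z\rangle$ is a genuine orbit sequence. The main obstacle is then density: given a nonempty open $V$, TT yields $N(\{x_0\},V)\ne\emptyset$, hence either $f^n(x_0)\in V$ for some $n\ge 0$, in which case the forward term $x_n$ lies in $V$, or $f^n(v)=x_0$ for some $v\in V$ and $n\ge 0$, in which case the singleton property forces $v=f^{-n}(x_0)$, a backward term of the sequence lying in $V$. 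In either event the orbit sequence meets $V$, so it is dense and DO holds. Combining all cases gives TT $\Rightarrow$ DO, and with the free reverse implication this establishes the equivalence.
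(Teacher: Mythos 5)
Your proposal is correct and follows essentially the same route as the paper: DO $\rightarrow$ TT is taken for free from Diagram 1, and TT $\rightarrow$ DO is handled via the three cases of Theorem \ref{thm:imperfect-equivalence}, with DO$_+$ sufficing in the easy configurations and a dense bi-infinite orbit sequence built through the singleton preimages of a suitable isolated point in the remaining ones, density following from the decomposition $N(\{x_0\},V)=N_+(\{x_0\},V)\cup N_+(V,\{x_0\})$ exactly as you argue. In fact your write-up spells out the density step in more detail than the paper, which simply asserts that TT makes $\langle f^n(x): n\in \Z\rangle$ a dense orbit sequence.
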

    
    \begin{proof}
    First we know that  DO implies TT in any dynamical system. Assume then that $(X,f)$ satisfies the
    TT property. We consider the three cases listed in Theorem \ref{thm:imperfect-equivalence}.
    
    In case (1), as shown in the proof of Theorem \ref{thm:imperfect-equivalence}, $(X,f)$ 
    satisfies DO$_+$ and thus DO. 
    
    In case (2), $|f^{-1}(x)|=1$ for all $x \in {\rm Iso}_X$. Then for any $x \in {\rm Iso}_X$, 
    $f^{-n}(x)$ is an isolated point for all $n\in \N$. The TT property implies that
    $\left\langle f^n(x): n \in \Z\right \rangle$ is an orbit sequence dense in $X$. Thus $(X,f)$ 
    satisfies DO.
    
    In case (3), let $x \in {\rm Iso}_X$ be the unique point such that $|f^{-1}(x)|=2$. Note that
    $x$ is a periodic point with period $k>0$. Let $f^{-1}(x) =\{ f^{k-1}(x), y\}$. Then
    $f^{-n}(y)\in {\rm Iso}_X$ for all $n \in \N$. Again, the TT property implies that $\langle 
    f^n(y): n \in \Z \rangle$ is an orbit sequence dense in $X$. Hence $(X,f)$ satisfies DO.
    \end{proof}

    \vskip 1em
    \noindent
    \textbf{Acknowledgement.} The authors would like to thank the referee for her/his 
    thorough check of the original manuscript. Her/his valuable comments and suggestions have improved 
    the presentation of this paper. In particular, Example \ref{exam:quas-cont} was provided by her/him.
   

    \end{document}